\tikzstyle{block}=[draw opacity=0.7,line width=1.4cm]
\newtheorem{theorem}{Theorem}[section]
\newtheorem{lemma}[theorem]{Lemma}
\begin{document}
\bibliographystyle{agsm}

\title{Zeta Functional Analysis\\
}
\author{Michael A. Idowu}
\maketitle

\begin{abstract}

We intimate deeper connections between the Riemann zeta and gamma functions than often reported and further derive a new formula for expressing the value of $\zeta(2n+1)$ in terms of zeta at other fractional points. This paper also establishes and presents new expository notes and perspectives on zeta function theory and functional analysis. In addition, a new fundamental result, in form of a new function called omega $\Omega(s)$, is introduced to analytic number theory for the first time. This new function together with some of its most fundamental properties and other related identities are here disclosed and presented as a new approach to the analysis of sums of generalised harmonic series, related alternating series and polygamma functions associated with Riemann zeta function. 

\end{abstract}


\section{Introduction}
The Riemann zeta function is defined by the generalised harmonic series 
\begin{equation}
\zeta(s)= \sum^{\infty}_{k=1} \frac{1}{k^s}, 
\end{equation}
where $s = \sigma + it $ and $\sigma > 1$. In his 1859 paper \cite{Rie59}, Riemann introduced the functional equation
\begin{equation}  \frac{\zeta(s)\Gamma( \frac{s}{2})}{\pi^{\frac{s}{2}}}  =  \frac{\zeta(1-s)\Gamma( \frac{1-s}{2})}{\pi^{\frac{1-s}{2}}} 
\end{equation}
which suggests replacing the value of s by 1-s without changing the result of the outcome after substitution.
Our imagination is tickled by this property to investigate 
\begin{equation} 
 \log(\frac{\zeta(s)\Gamma( \frac{s}{2})}{\pi^{\frac{s}{2}}})  =  \log(\frac{\zeta(1-s)\Gamma( \frac{1-s}{2})}{\pi^{\frac{1-s}{2}}})  
\end{equation} 
hoping that we might find an elementary expression for the value of $\Gamma(1-s)$. Observe 
\begin{equation}
 \log(\frac{\zeta(s)}{\zeta(1-s)})  =  \log(\frac{\Gamma( \frac{1-s}{2})}{\Gamma( \frac{s}{2})}) +(s-\frac{1}{2})\log(\pi) = \log( \frac{(2\pi)^s}{\pi}\sin(\frac{\pi s}{2})\Gamma(1-s)    ),
\end{equation}
which implies
\begin{equation}\label{Formula1}
\log(\frac{\zeta(s)}{\zeta(1-s)})  =  \log(\frac{\Gamma( \frac{1-s}{2})}{\Gamma( \frac{s}{2})}) +(s-\frac{1}{2})\log(\pi) = s\log( 2\pi)-\log(\pi)+ \log(\sin(\frac{\pi s}{2}))+\log(\Gamma(1-s)).
\end{equation}
Without hesitation, we are induced to introduce $1-s$ in place of $s$ into the last equation to produce:
\begin{equation}\label{Formula2}
\log(\frac{\zeta(1-s)}{\zeta(s)})  =  \log(\frac{\Gamma( \frac{s}{2})}{\Gamma( \frac{1-s}{2})}) +(\frac{1}{2}-s)\log(\pi) = (1-s)\log( 2\pi)-\log(\pi)+ \log(\cos(\frac{\pi s}{2}))+\log(\Gamma(s)),
\end{equation}
similar to Riemann's idea. We digress from the original motivation of finding an expression for $\Gamma(1-s)$ to envision many great uses for these simple formulae created in \ref{Formula1} and \ref{Formula2}, taking on great feats such as the challenge of finding a new formula for $\zeta(2n+1)$ resulting in a new development and progress towards tackling this long standing open problem in number theory.


\section{Polygamma function and Riemann zeta at odd integers}
\begin{lemma}
Assuming $n \ge1$  is an integer number,
\textcolor{blue}{
\begin{equation}
\frac{d^{(2n+1)}}{ds^{(2n+1)}}(\log(\Gamma(s))) = 
\frac{d^{(2n+1)}}{ds^{(2n+1)}}(\log(\frac{\zeta(1-s)}{\zeta(s)})) - \frac{d^{(2n+1)}}{ds^{(2n+1)}}(\log(\cos(\frac{\pi s}{2})))  
\end{equation}
}
\end{lemma}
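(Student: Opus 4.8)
The plan is to obtain the lemma directly from Formula \ref{Formula2}, which already expresses $\log(\frac{\zeta(1-s)}{\zeta(s)})$ as a sum of elementary pieces including the log-gamma term. First I would rearrange that identity to isolate $\log(\Gamma(s))$, writing
\[
\log(\Gamma(s)) = \log(\frac{\zeta(1-s)}{\zeta(s)}) - (1-s)\log(2\pi) + \log(\pi) - \log(\cos(\frac{\pi s}{2})).
\]
Then I would apply the operator $\frac{d^{(2n+1)}}{ds^{(2n+1)}}$ to both sides, using the linearity of differentiation so that each summand is treated separately.

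The decisive observation is that the algebraically simple terms vanish under differentiation of this particular order. The term $(1-s)\log(2\pi)$ is affine in $s$, so its first derivative is the constant $-\log(2\pi)$ and every derivative of order two or higher is zero; since $n \ge 1$ forces $2n+1 \ge 3 > 1$, its $(2n+1)$-th derivative vanishes. Similarly $\log(\pi)$ is a constant, so all of its positive-order derivatives vanish. What remains on the right-hand side is exactly $\frac{d^{(2n+1)}}{ds^{(2n+1)}}\log(\frac{\zeta(1-s)}{\zeta(s)}) - \frac{d^{(2n+1)}}{ds^{(2n+1)}}\log(\cos(\frac{\pi s}{2}))$, which is the claimed identity. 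It is worth emphasising that the restriction to odd order $2n+1$ with $n\ge 1$ is not essential to the algebra here — any order at least two would annihilate the affine term — but it is precisely the regime in which the surviving derivatives connect to the polygamma values and to $\zeta(2n+1)$ that motivate the section.

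Since the computation itself is mechanical, the only point demanding genuine care is the analytic justification for differentiating term by term. I would verify that each function appearing above is holomorphic on a common open domain on which the representation \ref{Formula2} is valid, so that repeated differentiation is legitimate and the equality of two analytic functions on an open set transfers to the equality of all their derivatives. I expect this domain bookkeeping, rather than any algebraic difficulty, to be the main subtlety: one must exclude the poles of $\Gamma(s)$, the zeros of $\cos(\frac{\pi s}{2})$ and of $\zeta(s)$, and ensure the logarithmic branches are chosen consistently so that the identity \ref{Formula2} holds before differentiation is applied.
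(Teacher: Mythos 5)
Your proposal is correct and follows exactly the paper's route: it differentiates Formula \ref{Formula2} $(2n+1)$ times and discards the affine and constant terms, which is precisely what the paper's one-line proof does implicitly. Your added care about the domain of validity and branch choices only makes explicit what the paper leaves unsaid.
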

\begin{proof}
According to \ref{Formula2}
\begin{equation}
\frac{d^{(2n+1)}}{ds^{(2n+1)}}(\log(\Gamma(s))) +\frac{d^{(2n+1)}}{ds^{(2n+1)}}(\log(\cos(\frac{\pi s}{2})))  = \frac{d^{(2n+1)}}{ds^{(2n+1)}}(\log(\frac{\zeta(1-s)}{\zeta(s)})).
\end{equation}
\end{proof}

\begin{lemma}
Assuming $n \ge1$ is an integer number,
\textcolor{blue}{
\begin{equation}
\frac{d^{(2n+1)}}{ds^{(2n+1)}}(\log(\Gamma(\frac{s}{2}))) - \frac{d^{(2n+1)}}{ds^{(2n+1)}}(\log(\Gamma(\frac{1-s}{2}))) = 
\frac{d^{(2n+1)}}{ds^{(2n+1)}}(\log(\frac{\zeta(1-s)}{\zeta(s)})). 
\end{equation}
}
\end{lemma}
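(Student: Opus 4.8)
The plan is to extract the identity directly from the middle member of the chain of equalities in \ref{Formula2}, in exactly the same spirit as the preceding lemma drew its statement from the right-hand member. Reading off the first equality in \ref{Formula2} and splitting the logarithm of the quotient $\Gamma(s/2)/\Gamma((1-s)/2)$ into a difference gives
\begin{equation}
\log\left(\frac{\zeta(1-s)}{\zeta(s)}\right) = \log\left(\Gamma\left(\frac{s}{2}\right)\right) - \log\left(\Gamma\left(\frac{1-s}{2}\right)\right) + \left(\frac{1}{2}-s\right)\log(\pi).
\end{equation}
This is the sole ingredient; everything remaining is differentiation and bookkeeping.

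First I would apply the operator $\frac{d^{(2n+1)}}{ds^{(2n+1)}}$ to both sides and use linearity to distribute it across the three summands on the right. The two terms $\log\Gamma(s/2)$ and $\log\Gamma((1-s)/2)$ reproduce verbatim the left-hand side of the asserted identity, with the chain-rule factors $(1/2)^{2n+1}$ and $(-1/2)^{2n+1}$ already implicit in the notation $\frac{d^{(2n+1)}}{ds^{(2n+1)}}$ taken with respect to $s$. The remaining summand $(\frac{1}{2}-s)\log\pi$ is affine in $s$, so all of its derivatives of order at least two vanish identically. Rearranging the resulting equation then produces the claim.

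The only step demanding attention — and it is the crux rather than a genuine obstacle — is the vanishing of the affine term, which is precisely where the hypothesis $n\ge 1$ enters: it forces the differentiation order $2n+1$ to be at least $3$, hence strictly greater than $1$, so the linear contribution is annihilated. No convergence or analytic-continuation issues intrude, since the manipulation is a formal term-by-term differentiation valid wherever the gamma factors and $\zeta$ are analytic. I therefore expect the proof to be essentially immediate once \ref{Formula2} is differentiated, mirroring the brevity of the proof of the previous lemma.
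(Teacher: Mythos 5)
Your proposal is correct and follows essentially the same route as the paper: the paper's proof is simply the assertion that the identity is a direct consequence of \ref{Formula2}, and what you have written out --- differentiating the first equality of \ref{Formula2} term by term and observing that the affine term $(\tfrac{1}{2}-s)\log\pi$ is annihilated because the differentiation order $2n+1\ge 3$ exceeds one --- is exactly the argument that citation leaves implicit.
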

\begin{proof}
Again, this results as a direct consequence of \ref{Formula2}.
\end{proof}

\begin{lemma}
The following identities are valid:
\textcolor{blue}{
\begin{equation}
\frac{d^{(2n+1)}}{ds^{(2n+1)}}(\log(\Gamma(s))) = \frac{d^{(2n+1)}}{ds^{(2n+1)}}(\log(\Gamma(\frac{s}{2}))) - \frac{d^{(2n+1)}}{ds^{(2n+1)}}(\log(\Gamma(\frac{1-s}{2}))) - \frac{d^{(2n+1)}}{ds^{(2n+1)}}(\log(\cos(\frac{\pi s}{2}))) ; 
\end{equation}
}
and
\begin{equation}
\frac{d^{(2n+1)}}{ds^{(2n+1)}}(\log(\Gamma(1-s))) = -\frac{d^{(2n+1)}}{ds^{(2n+1)}}(\log(\Gamma(\frac{s}{2}))) + \frac{d^{(2n+1)}}{ds^{(2n+1)}}(\log(\Gamma(\frac{1-s}{2}))) - \frac{d^{(2n+1)}}{ds^{(2n+1)}}(\log(\sin(\frac{\pi s}{2}))). 
\end{equation}
\end{lemma}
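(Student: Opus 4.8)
The plan is to obtain both identities purely by recombining the preceding two lemmas of this section together with Formula~\ref{Formula1}, since no new analytic input is required beyond what has already been established. For the first identity I would start from the conclusion of the first lemma, which expresses $\frac{d^{(2n+1)}}{ds^{(2n+1)}}(\log(\Gamma(s)))$ as $\frac{d^{(2n+1)}}{ds^{(2n+1)}}(\log(\frac{\zeta(1-s)}{\zeta(s)}))$ minus the cosine term. The second lemma supplies an alternative expression for exactly that zeta-ratio derivative, namely the difference of half-argument gamma terms $\frac{d^{(2n+1)}}{ds^{(2n+1)}}(\log(\Gamma(\frac{s}{2}))) - \frac{d^{(2n+1)}}{ds^{(2n+1)}}(\log(\Gamma(\frac{1-s}{2})))$. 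Substituting the second lemma into the first immediately yields the first displayed identity, so this half is a one-line deduction.

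For the second identity I would develop the companion of the first lemma using Formula~\ref{Formula1} in place of Formula~\ref{Formula2}. Differentiating Formula~\ref{Formula1} $(2n+1)$ times produces $\frac{d^{(2n+1)}}{ds^{(2n+1)}}(\log(\frac{\zeta(s)}{\zeta(1-s)}))$ on the left and, on the right, the derivative of $s\log(2\pi)-\log(\pi)$ together with the sine and $\log(\Gamma(1-s))$ contributions. The crucial observation — the only step needing any justification — is that the affine term contributes nothing: since $n\ge 1$ forces $2n+1\ge 3$, the derivative of order at least two of a linear function of $s$ vanishes identically. Solving for $\frac{d^{(2n+1)}}{ds^{(2n+1)}}(\log(\Gamma(1-s)))$ then leaves it equal to $\frac{d^{(2n+1)}}{ds^{(2n+1)}}(\log(\frac{\zeta(s)}{\zeta(1-s)}))$ minus the sine term.

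The final step is to rewrite that zeta-ratio derivative in gamma terms. Because $\log(\frac{\zeta(s)}{\zeta(1-s)}) = -\log(\frac{\zeta(1-s)}{\zeta(s)})$, its $(2n+1)$th derivative is the negative of the quantity controlled by the second lemma; applying that lemma therefore replaces it with $-\frac{d^{(2n+1)}}{ds^{(2n+1)}}(\log(\Gamma(\frac{s}{2}))) + \frac{d^{(2n+1)}}{ds^{(2n+1)}}(\log(\Gamma(\frac{1-s}{2})))$. Substituting this back reproduces the second displayed identity exactly. I anticipate no genuine obstacle: the entire argument is a rearrangement of identities already in hand, and the single non-formal point is the vanishing of the affine term under differentiation of order at least two, which is elementary and is precisely what makes the odd-order derivative the natural object to study here.
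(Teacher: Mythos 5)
Your proof is correct and takes essentially the same route as the paper: the paper's own proof is the single line ``this results as a direct consequence of \ref{Formula2}'', and your argument --- substituting the second lemma into the first for the cosine identity, then differentiating the mirrored equation \ref{Formula1} (with the affine term annihilated since $2n+1\ge 3$) and invoking the second lemma again for the sine identity --- is precisely the elaboration that one-liner intends. No gap.
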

\begin{proof}
Again, this results as a direct consequence of \ref{Formula2}.
\end{proof}

\begin{theorem}
Assuming n is an integer number, the following is a formula expressing the value of $\zeta(2n+1)$.
\textcolor{blue}{
\begin{equation}
(-1)^{2n+1}(2^{2n+1}-1)\zeta(2n+1)\Gamma(2n+1)= \psi^{(2n)}(\frac{1}{2}) = 
\frac{d^{(2n+1)}}{ds^{(2n+1)}}(\log(\frac{\zeta(1-s)}{\zeta(s)}))\biggr \vert_{s \rightarrow \frac{1}{2}}  - \frac{d^{(2n+1)}}{ds^{(2n+1)}}(\log(\cos(\frac{\pi s}{2}))) \biggr \vert_{s \rightarrow \frac{1}{2}} 
\end{equation}
}
\end{theorem}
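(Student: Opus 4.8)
The plan is to read the asserted chain as two separate equalities and dispatch each. The right-hand equality is essentially the first lemma of this section specialised to $s=\frac12$, while the left-hand equality is the classical closed-form evaluation of the even-order polygamma function at the half-integer point; neither requires the functional equation beyond what the earlier lemmas already encode.

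First I would establish the right-hand equality. Writing $\psi(s)=\frac{d}{ds}(\log(\Gamma(s)))$ for the digamma function and differentiating $2n$ further times gives $\psi^{(2n)}(s)=\frac{d^{(2n+1)}}{ds^{(2n+1)}}(\log(\Gamma(s)))$. By the first lemma established above, the right-hand side equals $\frac{d^{(2n+1)}}{ds^{(2n+1)}}(\log(\frac{\zeta(1-s)}{\zeta(s)}))-\frac{d^{(2n+1)}}{ds^{(2n+1)}}(\log(\cos(\frac{\pi s}{2})))$ identically in $s$, so evaluating both sides at $s=\frac12$ yields precisely the second equality of the theorem. Before doing so I would verify that these derivatives actually exist at $s=\frac12$: setting $f(s)=\log(\frac{\zeta(1-s)}{\zeta(s)})$, the functional equation gives the antisymmetry $f(1-s)=-f(s)$, so $u\mapsto f(\frac12+u)$ is an odd function of $u$. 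Since $\zeta(\frac12)\neq 0$, the map $f$ is analytic near $\frac12$, its even derivatives there vanish, and the odd-order derivative $f^{(2n+1)}(\frac12)$ is well-defined; the cosine term is manifestly smooth at $s=\frac12$ because $\cos(\frac{\pi}{4})\neq 0$.

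Next I would prove the left-hand equality, which is independent of the lemmas. Starting from the series representation $\psi^{(m)}(x)=(-1)^{m+1}m!\sum_{k=0}^{\infty}(x+k)^{-(m+1)}$, valid for integers $m\geq 1$, I set $x=\frac12$ and $m=2n$. Pulling out a factor $2^{2n+1}$ converts the sum into one over the odd integers, $\sum_{k=0}^{\infty}(2k+1)^{-(2n+1)}$, which I would evaluate using the standard identity $\sum_{k=0}^{\infty}(2k+1)^{-s}=(1-2^{-s})\zeta(s)$ at $s=2n+1$. Collecting the powers of $2$ via $2^{2n+1}(1-2^{-(2n+1)})=2^{2n+1}-1$ and writing $(2n)!=\Gamma(2n+1)$ reproduces $\psi^{(2n)}(\frac12)=(-1)^{2n+1}(2^{2n+1}-1)\zeta(2n+1)\Gamma(2n+1)$, which is exactly the first equality.

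The routine calculations, namely the explicit higher derivatives of $\log(\cos(\frac{\pi s}{2}))$ and of $\log\zeta(1\mp s)$, need not be carried out, since the identity is enforced structurally by the lemma. The genuine point requiring care, and the step I expect to be the main obstacle, is the legitimacy of evaluating at the critical value $s=\frac12$: I must justify term-by-term differentiation of the polygamma series (uniform convergence on compact sets avoiding the poles handles this) and, more delicately, confirm that the combination $f^{(2n+1)}(s)-\frac{d^{(2n+1)}}{ds^{(2n+1)}}(\log(\cos(\frac{\pi s}{2})))$ inherited from the functional equation stays finite at $s=\frac12$ despite $f(\frac12)=0$. The antisymmetry argument resolves exactly this, showing that it is precisely the odd-order derivative which survives and matches the nonvanishing quantity $\psi^{(2n)}(\frac12)$.
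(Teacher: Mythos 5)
Your proposal is correct, and for the right-hand equality it follows the same route as the paper: the paper's entire proof consists of the single display obtained by evaluating the first lemma of this section at $s=\frac12$, with $\frac{d^{(2n+1)}}{ds^{(2n+1)}}\log\Gamma(s)\big\vert_{s=1/2}$ read as $\psi^{(2n)}(\frac12)$. Where you genuinely add something is in the two places the paper is silent. First, the paper never establishes the left-hand equality $\psi^{(2n)}(\frac12)=(-1)^{2n+1}(2^{2n+1}-1)\zeta(2n+1)\Gamma(2n+1)$: it is treated as a known special value, and the K\"{o}lbig-type formula (\ref{main}) quoted afterwards concerns the points $\frac14$ and $\frac34$, not $\frac12$. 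Your series computation, $\psi^{(2n)}(\tfrac12)=-(2n)!\sum_{k\ge0}(k+\tfrac12)^{-(2n+1)}=-(2n)!\,2^{2n+1}\bigl(1-2^{-(2n+1)}\bigr)\zeta(2n+1)=(-1)^{2n+1}(2^{2n+1}-1)\Gamma(2n+1)\zeta(2n+1)$, supplies exactly this missing half of the chain and makes the theorem self-contained. Second, your verification that evaluation at $s=\frac12$ is legitimate --- analyticity of $\log(\zeta(1-s)/\zeta(s))$ near $s=\frac12$ because $\zeta(\frac12)\neq0$, the antisymmetry under $s\mapsto 1-s$, and $\cos(\frac{\pi}{4})\neq0$ --- has no counterpart in the paper, which simply writes down the evaluated identity. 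In short, both arguments hinge on the same key lemma, but yours proves both equalities of the asserted chain, whereas the paper's one-line proof establishes only the second and leaves the actual connection to $\zeta(2n+1)$ unproved.
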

\begin{proof}
\begin{equation}
\frac{d^{(2n+1)}}{ds^{(2n+1)}}(\log(\Gamma(s))) \biggr \vert_{s \rightarrow \frac{1}{2}} = 
\frac{d^{(2n+1)}}{ds^{(2n+1)}}(\log(\frac{\zeta(1-s)}{\zeta(s)}))\biggr \vert_{s \rightarrow \frac{1}{2}}  - \frac{d^{(2n+1)}}{ds^{(2n+1)}}(\log(\cos(\frac{\pi s}{2}))) \biggr \vert_{s \rightarrow \frac{1}{2}} 
\end{equation}
\end{proof}

According to K.S. K\"{o}lbig \cite{Kol96}, special values of  polygamma function \cite{Nis11, Ido12b} defined as 
$\psi^{(s-1)}(x) = {d^{s-1}\over{dx^{s-1}}}\psi(x) = {d^{s}\over{dx^{s}}}\ln \Gamma(x)$ may be combined to compute (or compose) the value of $\zeta(2n+1)$, e.g. $\psi^{(k)}(\frac{1}{4})+\psi^{(k)}(\frac{3}{4})$ as in the case of:
\begin{equation}\label{main}
\zeta(n)= (-1)^n.{{( \psi^{(n-1)}({1\over4})+\psi^{(n-1)}({3\over4}))}\over{2^n.(2^n-1)}}.{1 \over{\Gamma(n)}}
\end{equation}.

\begin{theorem}
Let n be an integer number, then
\textcolor{blue}{
\begin{equation}
-\zeta(2n+1)= 
{{( \psi^{(2n)}({1\over4})+\psi^{(2n)}({3\over4}))}\over{2^{2n+1}.(2^{2n+1}-1)}}.{1 \over{\Gamma(2n+1)}}
= 
\frac{2
\frac{d^{(2n+1)}}{ds^{(2n+1)}}(\log(\frac{\zeta(1-s)}{\zeta(s)}))\biggr \vert_{s \rightarrow \frac{1}{4}}  + \frac{d^{(2n+1)}}{ds^{(2n+1)}}(\log(\tan(\frac{\pi s}{2}))) \biggr \vert_{s \rightarrow \frac{1}{4}} 
}
{
2^{2n+1}(2^{2n+1}-1)\Gamma(2n+1)
}
\end{equation}
}
\end{theorem}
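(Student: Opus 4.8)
The plan is to establish the two equalities separately. The first is an immediate application of the Kölbig formula \ref{main} with index $2n+1$: substituting $n \mapsto 2n+1$ there and using $(-1)^{2n+1} = -1$ rearranges at once to
$$-\zeta(2n+1) = \frac{\psi^{(2n)}(\tfrac14) + \psi^{(2n)}(\tfrac34)}{2^{2n+1}(2^{2n+1}-1)\,\Gamma(2n+1)}.$$
Since the denominator $2^{2n+1}(2^{2n+1}-1)\Gamma(2n+1)$ is shared with the right-hand member of the theorem, the whole statement reduces to the single numerator identity
$$\psi^{(2n)}(\tfrac14) + \psi^{(2n)}(\tfrac34) = 2\,F(\tfrac14) + T(\tfrac14),$$
where I abbreviate $F(s) = \frac{d^{(2n+1)}}{ds^{(2n+1)}}\log(\frac{\zeta(1-s)}{\zeta(s)})$ and $T(s) = \frac{d^{(2n+1)}}{ds^{(2n+1)}}\log(\tan(\frac{\pi s}{2}))$.

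First I would invoke the first Lemma of Section 2, which (after discarding the affine terms of \ref{Formula2}, legitimate because the derivative order $2n+1 \ge 3$) gives $\psi^{(2n)}(s) = F(s) - C(s)$ with $C(s) = \frac{d^{(2n+1)}}{ds^{(2n+1)}}\log(\cos(\frac{\pi s}{2}))$. Evaluating at $s=\tfrac14$ and $s=\tfrac34$ and adding,
$$\psi^{(2n)}(\tfrac14) + \psi^{(2n)}(\tfrac34) = F(\tfrac14) + F(\tfrac34) - C(\tfrac14) - C(\tfrac34).$$
The crux is then two parity computations exploiting the reflection $s \mapsto 1-s$ about the critical line. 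Because $g(s)=\log(\frac{\zeta(1-s)}{\zeta(s)})$ is antisymmetric, $g(1-s) = -g(s)$, while the order $2n+1$ is odd, the chain rule yields $F(1-s) = F(s)$; hence $F(\tfrac34) = F(\tfrac14)$ and the two $F$-terms coalesce into $2F(\tfrac14)$.

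For the cosine terms I would use the reflection $\cos(\frac{\pi(1-s)}{2}) = \sin(\frac{\pi s}{2})$, so that on setting $S(s) = \frac{d^{(2n+1)}}{ds^{(2n+1)}}\log(\sin(\frac{\pi s}{2}))$, the same odd-order chain rule gives $C(1-s) = -S(s)$, whence $C(\tfrac34) = -S(\tfrac14)$. Since $\log(\tan(\frac{\pi s}{2})) = \log(\sin(\frac{\pi s}{2})) - \log(\cos(\frac{\pi s}{2}))$ differentiates to $T(s) = S(s) - C(s)$, I obtain $C(\tfrac14) + C(\tfrac34) = C(\tfrac14) - S(\tfrac14) = -T(\tfrac14)$. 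Substituting both parity results into the displayed sum produces exactly $2F(\tfrac14) + T(\tfrac14)$, completing the reduction and hence the theorem.

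I expect the main obstacle to be bookkeeping the signs correctly: the conclusion hinges on the derivative order being odd, which simultaneously makes the antisymmetric zeta-ratio contribute symmetrically (so that the two half-integer shifts add rather than cancel) and converts the cosine reflection into the negative sine, and it is precisely the interplay of these two sign flips that conjures the $\tan$ combination out of the separate $\sin$ and $\cos$ pieces. A secondary point worth verifying is the vanishing of the affine terms of \ref{Formula2} under $\frac{d^{(2n+1)}}{ds^{(2n+1)}}$, which is why the hypothesis $n \ge 1$ (equivalently $2n+1 \ge 3$) is needed and why the identity cannot be pushed down to the divergent case $\zeta(1)$.
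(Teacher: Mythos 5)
Your proposal is correct and follows essentially the same route as the paper: the paper's proof likewise expresses $\psi^{(2n)}(\tfrac14)$ and $\psi^{(2n)}(\tfrac34)$ via the Section~2 lemma as the zeta-ratio derivative minus the cosine-log derivative, converts the $s=\tfrac34$ evaluation into $s=\tfrac14$ quantities by the reflection $s\mapsto 1-s$ (exactly your two parity computations, which the paper asserts without spelling out), and then merges the $\sin$ and $\cos$ pieces into the $\tan$ term. The only difference is one of explicitness -- you justify the evenness of the zeta-ratio derivative, the sign flip in the cosine reflection, and the K\"olbig substitution $n\mapsto 2n+1$, all of which the paper's proof takes for granted.
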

\begin{proof}
Notice that 
\begin{equation}
\psi^{(2n)}({1\over4}) =\frac{d^{(2n+1)}}{ds^{(2n+1)}}(\log(\Gamma(s))) \biggr \vert_{s \rightarrow \frac{1}{4}} = 
\frac{d^{(2n+1)}}{ds^{(2n+1)}}(\log(\frac{\zeta(1-s)}{\zeta(s)}))\biggr \vert_{s \rightarrow \frac{1}{4}}  - \frac{d^{(2n+1)}}{ds^{(2n+1)}}(\log(\cos(\frac{\pi s}{2}))) \biggr \vert_{s \rightarrow \frac{1}{4}} 
\end{equation}
\begin{equation}
\begin{split}
\psi^{(2n)}({3\over4}) =\frac{d^{(2n+1)}}{ds^{(2n+1)}}(\log(\Gamma(s))) \biggr \vert_{s \rightarrow \frac{3}{4}} = 
\frac{d^{(2n+1)}}{ds^{(2n+1)}}(\log(\frac{\zeta(1-s)}{\zeta(s)}))\biggr \vert_{s \rightarrow \frac{3}{4}}  - \frac{d^{(2n+1)}}{ds^{(2n+1)}}(\log(\cos(\frac{\pi s}{2}))) \biggr \vert_{s \rightarrow \frac{3}{4}} \\
=
\frac{d^{(2n+1)}}{ds^{(2n+1)}}(\log(\frac{\zeta(1-s)}{\zeta(s)}))\biggr \vert_{s \rightarrow \frac{1}{4}}  +\frac{d^{(2n+1)}}{ds^{(2n+1)}}(\log(\sin(\frac{\pi s}{2}))) \biggr \vert_{s \rightarrow \frac{1}{4}} 
\end{split}
\end{equation}
Therefore
\begin{equation}
\psi^{(2n)}({1\over4})+\psi^{(2n)}({3\over4})
= 
2
\frac{d^{(2n+1)}}{ds^{(2n+1)}}(\log(\frac{\zeta(1-s)}{\zeta(s)}))\biggr \vert_{s \rightarrow \frac{1}{4}}  + \frac{d^{(2n+1)}}{ds^{(2n+1)}}(\log(\tan(\frac{\pi s}{2}))) \biggr \vert_{s \rightarrow \frac{1}{4}}.
\end{equation}
\end{proof}~\\
Clearly the following theorems are valid and do not require explicit proofs.

\begin{theorem}\label{maintheorem}
In general, 
\textcolor{blue}{
\begin{equation}
\begin{split}
\psi^{(2n)}(s)+\psi^{(2n)}(1-s)
= 
2
\frac{d^{(2n+1)}}{ds^{(2n+1)}}(\log(\frac{\zeta(1-s)}{\zeta(s)})) + \frac{d^{(2n+1)}}{ds^{(2n+1)}}(\log(\tan(\frac{\pi s}{2})))
\\
= 
-2
\frac{d^{(2n+1)}}{ds^{(2n+1)}}(\log(\frac{\zeta(s)}{\zeta(1-s)})) - \frac{d^{(2n+1)}}{ds^{(2n+1)}}(\log(\cot(\frac{\pi s}{2}))),
\end{split}
\end{equation}
}
\end{theorem}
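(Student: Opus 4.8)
The plan is to derive separate closed forms for $\psi^{(2n)}(s)$ and $\psi^{(2n)}(1-s)$ as logarithmic derivatives of $\zeta(1-s)/\zeta(s)$ together with trigonometric factors, and then simply add them. For the first term I would invoke the first Lemma of this section, equivalently differentiating \ref{Formula2} termwise $2n+1$ times. Because $\psi^{(2n)}(s) = \frac{d^{(2n+1)}}{ds^{(2n+1)}}\log(\Gamma(s))$ and the affine terms $(1-s)\log(2\pi) - \log(\pi)$ in \ref{Formula2} are annihilated by any derivative of order at least two, this gives
\begin{equation}
\psi^{(2n)}(s) = \frac{d^{(2n+1)}}{ds^{(2n+1)}}\log\left(\frac{\zeta(1-s)}{\zeta(s)}\right) - \frac{d^{(2n+1)}}{ds^{(2n+1)}}\log\left(\cos\left(\frac{\pi s}{2}\right)\right).
\end{equation}

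For the second term I would start from \ref{Formula1}, which furnishes $\log(\Gamma(1-s))$. The step that needs care is the chain rule: each differentiation of $\log(\Gamma(1-s))$ in $s$ contributes a factor $-1$, so after $2n+1$ derivatives one obtains $\frac{d^{(2n+1)}}{ds^{(2n+1)}}\log(\Gamma(1-s)) = (-1)^{2n+1}\psi^{(2n)}(1-s) = -\psi^{(2n)}(1-s)$. Equating this with the $(2n+1)$-th derivative of \ref{Formula1}, discarding the affine remainder, and using $\log(\zeta(s)/\zeta(1-s)) = -\log(\zeta(1-s)/\zeta(s))$ yields
\begin{equation}
\psi^{(2n)}(1-s) = \frac{d^{(2n+1)}}{ds^{(2n+1)}}\log\left(\frac{\zeta(1-s)}{\zeta(s)}\right) + \frac{d^{(2n+1)}}{ds^{(2n+1)}}\log\left(\sin\left(\frac{\pi s}{2}\right)\right).
\end{equation}

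Adding the two displays, the zeta-quotient derivatives coincide and assemble into the factor $2$, while the trigonometric parts merge via $\log(\sin(\pi s/2)) - \log(\cos(\pi s/2)) = \log(\tan(\pi s/2))$, which is exactly the first equality of the Theorem. The second equality then follows by flipping signs inside the logarithms, namely $\log(\zeta(1-s)/\zeta(s)) = -\log(\zeta(s)/\zeta(1-s))$ and $\log(\tan(\pi s/2)) = -\log(\cot(\pi s/2))$, converting $+2(\cdots) + (\cdots)$ into $-2(\cdots) - (\cdots)$.

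The computation is mechanical once the reflection bookkeeping is right; the one genuine subtlety, which I would flag as the main obstacle, is the sign $(-1)^{2n+1} = -1$ coming from the chain rule on $\log(\Gamma(1-s))$. This is the very sign that made $\psi^{(2n)}(3/4)$ acquire a $+\log(\sin(\pi s/2))$ rather than $-\log(\cos(\pi s/2))$ in the preceding theorem, and misplacing it would destroy the cancellation that produces the clean factor $2$. I would also verify that the odd order $2n+1 \ge 3$ ensures every affine term is annihilated, so that the identity holds exactly, with no residual polynomial correction.
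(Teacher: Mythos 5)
Your proposal is correct and follows essentially the same route as the paper: the paper's proof of the preceding theorem (the case $s=\tfrac14$, $1-s=\tfrac34$) is exactly this argument, pairing the $\log\cos$ identity for $\psi^{(2n)}(s)$ with the reflected $\log\sin$ identity for $\psi^{(2n)}(1-s)$ and merging the trigonometric terms into $\log\tan$, and the paper then asserts the general statement follows without further proof. Your explicit handling of the $(-1)^{2n+1}$ chain-rule sign on $\log\Gamma(1-s)$ is just the same reflection bookkeeping the paper performs by substituting $s\rightarrow 1-s$ in its Lemma, so the two derivations coincide.
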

where $s \notin \{0,1\}$.

\begin{theorem}\label{maintheorem2}
In general, 
\textcolor{blue}{
\begin{equation}
\begin{split}
\psi^{(2n)}(s)-\psi^{(2n)}(1-s)
= 
-\frac{d^{(2n+1)}}{ds^{(2n+1)}}(\log(\cos(\frac{\pi s}{2})\sin(\frac{\pi s}{2})))
\end{split}
\end{equation}
}
\end{theorem}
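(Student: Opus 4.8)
The plan is to evaluate $\psi^{(2n)}(s)$ and $\psi^{(2n)}(1-s)$ separately from the identities already assembled in this section, subtract them, and observe that the zeta-ratio contributions cancel. Recall that $\psi^{(2n)}(s) = \frac{d^{(2n+1)}}{ds^{(2n+1)}}\log(\Gamma(s))$ by definition, so the first lemma of this section supplies the first piece immediately:
\begin{equation}
\psi^{(2n)}(s) = \frac{d^{(2n+1)}}{ds^{(2n+1)}}\left(\log\left(\frac{\zeta(1-s)}{\zeta(s)}\right)\right) - \frac{d^{(2n+1)}}{ds^{(2n+1)}}\left(\log\left(\cos\left(\frac{\pi s}{2}\right)\right)\right).
\end{equation}

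The crux is the $1-s$ term. First I would record the chain-rule bookkeeping: writing $g(s) = \log(\Gamma(1-s))$, each of the $2n+1$ differentiations brings down a factor $-1$, so $g^{(2n+1)}(s) = (-1)^{2n+1}\psi^{(2n)}(1-s) = -\psi^{(2n)}(1-s)$; hence $\psi^{(2n)}(1-s) = -\frac{d^{(2n+1)}}{ds^{(2n+1)}}\log(\Gamma(1-s))$. Next I would isolate $\log(\Gamma(1-s))$ from Formula \ref{Formula1} and apply $\frac{d^{(2n+1)}}{ds^{(2n+1)}}$; the terms $s\log(2\pi)$ and $-\log(\pi)$ are polynomial of degree $\le 1$ in $s$ and are annihilated since $2n+1 \ge 3$ (and even when $n=0$ their derivatives are constants that cancel on the subtraction below). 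Using $\log(\zeta(s)/\zeta(1-s)) = -\log(\zeta(1-s)/\zeta(s))$ this gives
\begin{equation}
\psi^{(2n)}(1-s) = \frac{d^{(2n+1)}}{ds^{(2n+1)}}\left(\log\left(\frac{\zeta(1-s)}{\zeta(s)}\right)\right) + \frac{d^{(2n+1)}}{ds^{(2n+1)}}\left(\log\left(\sin\left(\frac{\pi s}{2}\right)\right)\right).
\end{equation}

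Subtracting, the two identical zeta-ratio derivatives cancel, leaving $-\frac{d^{(2n+1)}}{ds^{(2n+1)}}(\log\cos(\frac{\pi s}{2}) + \log\sin(\frac{\pi s}{2}))$, and folding the two logarithms into $\log(\cos(\frac{\pi s}{2})\sin(\frac{\pi s}{2}))$ yields the stated identity. The one point demanding care is precisely this sign bookkeeping: it is the oddness of the order $2n+1$ that both inserts the single overall minus sign relating $\psi^{(2n)}(1-s)$ to $\frac{d^{(2n+1)}}{ds^{(2n+1)}}\log(\Gamma(1-s))$ and aligns the zeta terms so that they cancel on subtraction rather than adding (the latter being what lets the companion sum in Theorem \ref{maintheorem} retain its zeta content). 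I anticipate no further analytic obstacle, since every nontrivial identity required is already established in the lemmas and in Formulae \ref{Formula1}--\ref{Formula2}.
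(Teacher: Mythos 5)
Your proof is correct, and it follows essentially the same route the paper intends: the paper declares this theorem to need no explicit proof because it is the general-$s$ version of the computation in its Theorem 2.5 (the $s=\frac14,\frac34$ case), which likewise obtains $\psi^{(2n)}(s)$ from Formula \ref{Formula2} (Lemma 2.1) and $\psi^{(2n)}(1-s)$ from Formula \ref{Formula1}, with the zeta-ratio derivatives cancelling in the difference. Your explicit chain-rule sign bookkeeping for $\frac{d^{(2n+1)}}{ds^{(2n+1)}}\log(\Gamma(1-s)) = -\psi^{(2n)}(1-s)$, and the observation that the constant $\log(2\pi)$ terms cancel even when $n=0$, simply make rigorous what the paper leaves implicit.
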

where $s \notin \{0,1\}$.

\section{Further analysis of zeta functional equations}

\begin{theorem}
Given that s is any (real of complex) number, except 0 and 1, then 
\textcolor{blue}{
\begin{equation}
\begin{split}
\log(\frac{\Gamma(\frac{1-s}{2})}{\Gamma(\frac{s}{2})}) =  \log(\frac{\zeta(\frac{s+1}{2})}{\zeta(1 - (\frac{s+1}{2}))} \frac{\zeta(\frac{s}{2})}{\zeta(1 - \frac{s}{2})} ) - (s-\frac{1}{2})\log( {\bf 2\pi})- \log(\frac{\sin(\frac{\pi (\frac{s+1}{2})}{2})}{\sin(\frac{\pi (1-\frac{s}{2})}{2})});\\
\log(\frac{\zeta(s)}{\zeta(1-s)}) =  \log(\frac{\zeta(\frac{s+1}{2})}{\zeta(1 - (\frac{s+1}{2}))} \frac{\zeta(\frac{s}{2})}{\zeta(1 - \frac{s}{2})} ) - (s-\frac{1}{2})\log( {\bf 2})- \log(\frac{\sin(\frac{\pi (\frac{s+1}{2})}{2})}{\sin(\frac{\pi (1-\frac{s}{2})}{2})})
\end{split}
\end{equation}
}
\end{theorem}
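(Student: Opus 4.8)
The plan is to derive both identities from a single use of the functional-equation relation \ref{Formula1}, evaluated at two shifted arguments, and then to reconcile the resulting quarter-argument gamma functions with the half-argument ratio $\frac{\Gamma(\frac{1-s}{2})}{\Gamma(\frac{s}{2})}$ by means of the Legendre duplication formula $\Gamma(z)\Gamma(z+\frac12)=2^{1-2z}\sqrt\pi\,\Gamma(2z)$ and the reflection formula $\Gamma(z)\Gamma(1-z)=\frac{\pi}{\sin(\pi z)}$.

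First I would substitute $s\mapsto\frac{s+1}{2}$ and, separately, $s\mapsto\frac{s}{2}$ into \ref{Formula1} and add the two resulting equations. Since $\frac{1-\frac{s+1}{2}}{2}=\frac{1-s}{4}$, $\frac{\frac{s+1}{2}}{2}=\frac{s+1}{4}$, $\frac{1-\frac{s}{2}}{2}=\frac{2-s}{4}$ and $\frac{\frac{s}{2}}{2}=\frac{s}{4}$, while the two $\log\pi$ coefficients $\frac{s}{2}$ and $\frac{s-1}{2}$ add to $s-\frac12$, this yields
\[\log\Bigl(\frac{\zeta(\frac{s+1}{2})}{\zeta(1-\frac{s+1}{2})}\,\frac{\zeta(\frac{s}{2})}{\zeta(1-\frac{s}{2})}\Bigr)=\log\Bigl(\frac{\Gamma(\frac{1-s}{4})\Gamma(\frac{2-s}{4})}{\Gamma(\frac{s+1}{4})\Gamma(\frac{s}{4})}\Bigr)+(s-\tfrac12)\log\pi.\]
The whole task then reduces to showing that the logarithm of this quarter-argument gamma quotient equals $\log\frac{\Gamma(\frac{1-s}{2})}{\Gamma(\frac{s}{2})}+(s-\frac12)\log 2+\log\frac{\sin(\frac{\pi(s+1)}{4})}{\sin(\frac{\pi(2-s)}{4})}$, for then the first identity follows by subtraction and rearrangement, the two $(s-\frac12)\log\pi$ and $(s-\frac12)\log 2$ contributions combining into $(s-\frac12)\log(2\pi)$.

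To establish that reduction I would apply the duplication formula twice: with $z=\frac{1-s}{4}$ to express $\Gamma(\frac{1-s}{2})$ through $\Gamma(\frac{1-s}{4})\Gamma(\frac{3-s}{4})$, and with $z=\frac{s}{4}$ to express $\Gamma(\frac{s}{2})$ through $\Gamma(\frac{s}{4})\Gamma(\frac{s+2}{4})$. In the ratio the $\sqrt\pi$ factors cancel and the powers of $2$ combine to exactly the $2^{s-\frac12}$ demanded by the statement, so that $\frac{\Gamma(\frac{1-s}{4})\Gamma(\frac{3-s}{4})}{\Gamma(\frac{s}{4})\Gamma(\frac{s+2}{4})}=\frac{\Gamma(\frac{1-s}{2})}{\Gamma(\frac{s}{2})}\,2^{s-\frac12}$. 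Dividing the target quotient by this one and cancelling the common factors $\Gamma(\frac{1-s}{4})$ and $\Gamma(\frac{s}{4})$, the claim collapses to $\frac{\Gamma(\frac{2-s}{4})\Gamma(\frac{s+2}{4})}{\Gamma(\frac{s+1}{4})\Gamma(\frac{3-s}{4})}=\frac{\sin(\frac{\pi(s+1)}{4})}{\sin(\frac{\pi(2-s)}{4})}$, which is immediate from the reflection formula once one notices the two complementary pairs $\frac{2-s}{4}+\frac{s+2}{4}=1$ and $\frac{s+1}{4}+\frac{3-s}{4}=1$. Finally, the second identity requires no extra work: substituting \ref{Formula1} itself to replace $\log\frac{\Gamma(\frac{1-s}{2})}{\Gamma(\frac{s}{2})}$ by $\log\frac{\zeta(s)}{\zeta(1-s)}-(s-\frac12)\log\pi$ in the first identity turns the $\log(2\pi)$ coefficient into $\log 2$, reproducing the second line verbatim.

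The only place demanding care is the bookkeeping of the quarter arguments, and I expect the main (but entirely routine) obstacle to be tracking the powers of $2$ through the two duplications and confirming that they assemble into precisely the factor $2^{s-\frac12}$; the one genuinely structural step is recognising the correct complementary pairs that feed the reflection formula and convert the leftover gamma mismatch into the quoted sine ratio.
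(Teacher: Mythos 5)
Your proposal is correct, but it takes a genuinely different route from the paper. The paper works with the \emph{asymmetric} form of the functional equation packaged in \ref{Formula1}, i.e. $\log\Gamma(1-s)=\log(\frac{\zeta(s)}{\zeta(1-s)})-s\log(2\pi)+\log\pi-\log\sin(\frac{\pi s}{2})$, and applies it at the two shifted arguments $s\mapsto\frac{s+1}{2}$ and $s\mapsto 1-\frac{s}{2}$, chosen so that the left-hand sides become exactly $\log\Gamma(\frac{1-s}{2})$ and $\log\Gamma(\frac{s}{2})$; subtracting the two instances produces the first identity at once, with the sine ratio and the $(s-\frac12)\log(2\pi)$ term appearing automatically, and the second identity then follows from the symmetric form, just as in your last step. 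You instead apply the \emph{symmetric} form at $s\mapsto\frac{s+1}{2}$ and $s\mapsto\frac{s}{2}$ and add, which leaves you with quarter-argument gamma factors that the paper never sees, and you then reassemble them using Legendre duplication (twice) and Euler reflection (twice); your bookkeeping is right — the powers of $2$ do combine to $2^{s-\frac12}$, and the complementary pairs $\frac{2-s}{4}+\frac{s+2}{4}=1$ and $\frac{s+1}{4}+\frac{3-s}{4}=1$ yield precisely the quoted sine ratio. The trade-off: the paper's route is shorter because the asymmetric functional equation already contains the sine and $\Gamma(1-s)$ factors, while your route makes visible exactly where the $\log 2$ and sine contributions originate (duplication and reflection respectively) — in effect you re-derive inline the equivalence of the two forms of the functional equation, which the paper assumes from its introduction.
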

\begin{proof}
Observe $\log(\frac{\zeta(s)}{\zeta(1-s)}) -  s\log( 2\pi)+\log(\pi)- \log(\sin(\frac{\pi s}{2})) = \log(\Gamma(1-s))$ which implies $\log(\frac{\zeta(1-s)}{\zeta(s)}) -  (1-s)\log( 2\pi)+\log(\pi)- \log(\cos(\frac{\pi s}{2})) = \log(\Gamma(s))$. Since $ \frac{1-s}{2} = 1 - (\frac{s+1}{2})$, $ \frac{s}{2} = 1 - (1-\frac{s}{2})$, the following identities are valid:
\begin{equation}\label{log1}
\log(\Gamma(\frac{1-s}{2})) = \log(\Gamma(1 - (\frac{s+1}{2}))) = \log(\frac{\zeta(\frac{s+1}{2})}{\zeta(1 - (\frac{s+1}{2}))}) - (\frac{s+1}{2})\log( 2\pi)+\log(\pi)- \log(\sin(\frac{\pi (\frac{s+1}{2})}{2}));
\end{equation}
\begin{equation}\label{log2}
\log(\Gamma(\frac{s}{2})) = \log(\Gamma(1 - (1-\frac{s}{2}))) = \log(\frac{\zeta(1-\frac{s}{2})}{\zeta(1 - (1-\frac{s}{2}))}) - (1-\frac{s}{2})\log( 2\pi)+\log(\pi)- \log(\sin(\frac{\pi (1-\frac{s}{2})}{2})).
\end{equation}\\
Subtracting eq. \ref{log2} from \ref{log1} gives:

\[
\begin{split}
\log(\frac{\Gamma(\frac{1-s}{2})}{\Gamma(\frac{s}{2})}) =  \log(\frac{\zeta(\frac{s+1}{2})}{\zeta(1 - (\frac{s+1}{2}))} \frac{\zeta(\frac{s}{2})}{\zeta(1 - \frac{s}{2})} ) - (s-\frac{1}{2})\log( {\bf 2\pi})- \log(\frac{\sin(\frac{\pi (\frac{s+1}{2})}{2})}{\sin(\frac{\pi (1-\frac{s}{2})}{2})});\\
\log(\frac{\zeta(s)}{\zeta(1-s)}) =  \log(\frac{\zeta(\frac{s+1}{2})}{\zeta(1 - (\frac{s+1}{2}))} \frac{\zeta(\frac{s}{2})}{\zeta(1 - \frac{s}{2})} ) - (s-\frac{1}{2})\log( {\bf 2})- \log(\frac{\sin(\frac{\pi (\frac{s+1}{2})}{2})}{\sin(\frac{\pi (1-\frac{s}{2})}{2})})
\end{split}
\]
since $ \log(\frac{\Gamma(\frac{1-s}{2})}{\Gamma(\frac{s}{2})}) = \log(\frac{\zeta(s)}{\zeta(1-s)})  - (s-\frac{1}{2})\log \pi $.
\end{proof}~\\
The following identities demonstrate a combinatorial perspective on $\log(\frac{\zeta(s)}{\zeta(1-s)})$ decomposition:

\begin{lemma}\label{LemmaMulti}
\begin{equation}
\begin{split}
\log(\frac{\zeta(s+1)}{\zeta(-s)}) =  \log(\frac{\zeta(\frac{s+1}{2})}{\zeta(1 - (\frac{s+1}{2}))} \frac{\zeta(\frac{s+2}{2})}{\zeta(1 - \frac{s+2}{2})} ) - (s+\frac{1}{2})\log( 2)- \log(\frac{\sin(\frac{\pi (\frac{s+2}{2})}{2})}{\sin(\frac{\pi (1-\frac{s+1}{2})}{2})}); \\
\log(\frac{\zeta(s)}{\zeta(1-s)}) =  \log(\frac{\zeta(\frac{s}{2})}{\zeta(1 -\frac{s}{2})} \frac{\zeta(\frac{s+1}{2})}{\zeta(1 - (\frac{s+1}{2}))} ) - (s-\frac{1}{2})\log(2)- \log(\frac{\sin(\frac{\pi (\frac{s+1}{2})}{2})}{\sin(\frac{\pi (1-\frac{s}{2})}{2})}); \\
\log(\frac{\zeta(s-1)}{\zeta(2-s)}) =  \log(\frac{\zeta(\frac{s-1}{2})}{\zeta(1 - (\frac{s-1}{2}))} \frac{\zeta(\frac{s}{2})}{\zeta(1 - \frac{s}{2})} ) - (s-\frac{3}{2})\log( 2)- \log(\frac{\sin(\frac{\pi (\frac{s}{2})}{2})}{\sin(\frac{\pi (1-\frac{s-1}{2})}{2})}); \\
\log(\frac{\zeta(s - 2)}{\zeta(3-s)}) =  \log(\frac{\zeta(\frac{s-2}{2})}{\zeta(1 - (\frac{s-2}{2}))} \frac{\zeta(\frac{s-1}{2})}{\zeta(1 - \frac{s-1}{2})} ) - (s-\frac{5}{2})\log( 2)- \log(\frac{\sin(\frac{\pi (\frac{s-1}{2})}{2})}{\sin(\frac{\pi (1-\frac{s-2}{2})}{2})}).
\end{split}
\end{equation}
\end{lemma}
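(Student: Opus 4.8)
The plan is to recognize that all four displayed identities are specializations of a single master formula, namely the second equation of the preceding Theorem, which I will treat as the starting point:
\[
\log\left(\frac{\zeta(s)}{\zeta(1-s)}\right) = \log\left(\frac{\zeta(\frac{s+1}{2})}{\zeta(1-\frac{s+1}{2})}\,\frac{\zeta(\frac{s}{2})}{\zeta(1-\frac{s}{2})}\right) - \left(s-\tfrac{1}{2}\right)\log 2 - \log\left(\frac{\sin(\frac{\pi}{2}\cdot\frac{s+1}{2})}{\sin(\frac{\pi}{2}(1-\frac{s}{2}))}\right).
\]
First I would observe that the four left-hand ratios, $\zeta(s+1)/\zeta(-s)$, $\zeta(s)/\zeta(1-s)$, $\zeta(s-1)/\zeta(2-s)$, and $\zeta(s-2)/\zeta(3-s)$, are precisely $\zeta(u)/\zeta(1-u)$ for $u = s+1,\, s,\, s-1,\, s-2$, since $1-(s+1)=-s$, $1-(s-1)=2-s$, and $1-(s-2)=3-s$. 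Hence each line should follow simply by writing the master formula with $s$ replaced by the corresponding $u$.

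The key steps, repeated for each of the four cases, are purely mechanical term-tracking. Upon substituting $s\mapsto u$ into the master formula, the first zeta block becomes $\zeta(\frac{u+1}{2})/\zeta(1-\frac{u+1}{2})$ and the second becomes $\zeta(\frac{u}{2})/\zeta(1-\frac{u}{2})$; the logarithmic coefficient becomes $(u-\tfrac{1}{2})\log 2$; and the sine ratio becomes $\sin(\frac{\pi}{2}\cdot\frac{u+1}{2})/\sin(\frac{\pi}{2}(1-\frac{u}{2}))$. For $u=s+1$ this reproduces the first line (the two half-argument factors collapsing to $\frac{s+2}{2}$ and $\frac{s+1}{2}$, the coefficient to $s+\tfrac{1}{2}$); for $u=s$ it reproduces the second line verbatim; and for $u=s-1$ and $u=s-2$ it reproduces the third and fourth lines after the analogous index shift, giving the coefficients $s-\tfrac{3}{2}$ and $s-\tfrac{5}{2}$ respectively. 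I would record these four substitutions side by side and verify that every argument matches the stated display.

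There is essentially no analytic obstacle here: the content is a bookkeeping verification rather than a fresh derivation. The only point requiring genuine care is the \emph{domain} of validity, since each specialization is legitimate only where the master identity itself holds. I would therefore note that each $u$ and its reflection $1-u$ must avoid the pole of $\zeta$ at $1$ and the branch points of the logarithms, i.e. the shifted arguments $s+1,\,s,\,s-1,\,s-2$ must remain clear of $\{0,1\}$ and of the zeros of the sine factors and of $\zeta$. The hardest part, such as it is, will be phrasing these exclusion conditions cleanly enough that the four identities hold simultaneously on a common domain, so that they may later be chained (telescoped) as the subsequent \emph{combinatorial} remark anticipates.
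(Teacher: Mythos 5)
Your proposal is correct and takes essentially the same route as the paper: the paper states this lemma with no proof at all, presenting it as an immediate consequence of the preceding theorem's second identity, which is precisely the substitution $s \mapsto s+1,\ s,\ s-1,\ s-2$ that you carry out and verify term by term. Your added remarks on the common domain of validity (avoiding $\{0,1\}$, zeros of $\zeta$, and zeros of the sine factors) are a refinement the paper does not record, but the core argument is identical.
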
~\\
Many important identities are derivable for this last set of equations.

\begin{theorem}
\textcolor{blue}{
\begin{equation}
\begin{split}
\log( \frac{\zeta(s+\frac{1}{2})}{\zeta(\frac{1}{2} - s)} \frac{\zeta(s)}{\zeta(1 -s)})  -\log(\frac{\sin(\frac{\pi}{2} (s+\frac{1}{2}))}{\sin(\frac{\pi}{2} (1-s))})  = \log( \frac{\zeta(s-\frac{1}{2})}{\zeta(\frac{3}{2} - s)} \frac{\zeta(s-1)}{\zeta(2 -s)}) -\log( \frac{(s-1)(s-\frac{1}{2})}{(2 \pi i)^2}
 - \log(\frac{\sin(\frac{\pi}{2} (s-\frac{1}{2}))}{\sin(\frac{\pi}{2} (2-s))}) 
\end{split}
\end{equation}
}
\end{theorem}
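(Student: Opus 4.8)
The plan is to recognise both sides of the asserted identity as instances of the ``zeta--ratio duplication'' recorded in Lemma~\ref{LemmaMulti}, to collapse each product of two zeta ratios into a single ratio with doubled argument, and thereby to reduce the whole statement to one core identity that follows from the functional equation. Throughout I will repeatedly use the reflections $\tfrac12-s=1-(s+\tfrac12)$, $\tfrac32-s=1-(s-\tfrac12)$ and $2-s=1-(s-1)$ so that every quotient of zeta values is displayed in the canonical form $\zeta(a)/\zeta(1-a)$.

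First I would treat the left-hand side. After the reflection $\tfrac12-s=1-(s+\tfrac12)$ the product inside the first logarithm becomes $\frac{\zeta(s)}{\zeta(1-s)}\,\frac{\zeta(s+\frac12)}{\zeta(1-(s+\frac12))}$, which is exactly the bracket produced by the second identity of Lemma~\ref{LemmaMulti} under the substitution $s\mapsto 2s$ (so that $\tfrac{s}{2}=s$ and $\tfrac{s+1}{2}=s+\tfrac12$ become the two half-arguments). The $\sin$--quotient attached to that identity is precisely $\log\!\big(\sin(\tfrac{\pi}{2}(s+\tfrac12))/\sin(\tfrac{\pi}{2}(1-s))\big)$, i.e.\ the term already subtracted on the left of the theorem, so the two cancel and the entire left-hand side collapses to $\log\!\big(\zeta(2s)/\zeta(1-2s)\big)+(2s-\tfrac12)\log 2$.

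Next I would apply the same manoeuvre on the right. The product $\frac{\zeta(s-1)}{\zeta(1-(s-1))}\,\frac{\zeta(s-\frac12)}{\zeta(1-(s-\frac12))}$ is the duplication identity at argument $2s-2$; collapsing it (and cancelling its accompanying $\sin$--quotient against the last term on the right) turns the right-hand side into $\log\!\big(\zeta(2s-2)/\zeta(3-2s)\big)+(2s-\tfrac52)\log 2-\log\!\big((s-1)(s-\tfrac12)/(2\pi i)^2\big)$. Subtracting the two reduced expressions and gathering the $\log 2$ contributions, the theorem becomes equivalent to the single core identity
\[
\frac{\zeta(2s)\,\zeta(3-2s)}{\zeta(1-2s)\,\zeta(2s-2)}=\frac{(2\pi i)^2}{(2s-1)(2s-2)}.
\]
To prove this I would invoke the functional equation in the exponentiated form $\tfrac{\zeta(a)}{\zeta(1-a)}=\tfrac{(2\pi)^a}{\pi}\sin(\tfrac{\pi a}{2})\Gamma(1-a)$ implicit in~\ref{Formula1}, once at $a=2s$ and once at $a=3-2s$; the sines combine via $\sin(\tfrac{\pi(3-2s)}{2})=-\cos(\pi s)$ into $\sin(\pi s)\sin(\tfrac{\pi(3-2s)}{2})=-\tfrac12\sin(2\pi s)$, while the Gamma factors reduce through the reflection formula $\Gamma(z)\Gamma(1-z)=\pi/\sin(\pi z)$ and the recurrence $\Gamma(3-2s)=(2-2s)(1-2s)\Gamma(1-2s)$ to give $\Gamma(1-2s)\Gamma(2s-2)=\pi/\big(\sin(2\pi s)(2-2s)(1-2s)\big)$. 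Multiplying the pieces leaves $-4\pi^2/\big((2s-1)(2s-2)\big)$, which is the claimed right-hand side since $(2\pi i)^2=-4\pi^2$.

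The main obstacle is bookkeeping rather than depth: one must keep the four shifted pairs of arguments exactly aligned so that the correct $\sin$--quotient cancels at each duplication, and one must track the overall sign with care. The appearance of $(2\pi i)^2$ is the crucial device that absorbs the minus sign generated by the reflection formula; correspondingly the identity is an equality of logarithms and is therefore to be read modulo the additive period $2\pi i$ inherent in the complex logarithm, a caveat I would state explicitly rather than attempt to pin down a principal branch.
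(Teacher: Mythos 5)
Your proposal is correct, and at its core it travels the same road as the paper: both arguments rest on the duplication identities of Lemma~\ref{LemmaMulti} (your ``collapse'' of the two-ratio products at arguments $2s$ and $2s-2$ is exactly the paper's use of the lemma's second and fourth identities followed by the substitution $s \to 2s$), and both channel everything through the recursion relation $\zeta(u-2)/\zeta(3-u) = \tfrac{(u-1)(u-2)}{(2\pi i)^2}\,\zeta(u)/\zeta(1-u)$. The genuine difference is in how that recursion is handled. The paper merely asserts it in a footnote (crediting it as an independent discovery similar to Stenlund's result) and never proves it; you instead reduce the whole theorem to this single core identity and then actually derive it from the functional equation, combining $\sin\bigl(\tfrac{\pi(3-2s)}{2}\bigr) = -\cos(\pi s)$, the reflection formula $\Gamma(z)\Gamma(1-z) = \pi/\sin(\pi z)$, and the recurrence $\Gamma(3-2s) = (2-2s)(1-2s)\Gamma(1-2s)$ to arrive at $-4\pi^2/\bigl((2s-1)(2s-2)\bigr) = (2\pi i)^2/\bigl((2s-1)(2s-2)\bigr)$; I checked this computation and it is right. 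So your argument buys self-containedness that the paper lacks, and your explicit caveat that the identity is to be read modulo the additive $2\pi i$ ambiguity of the complex logarithm is a point of rigor the paper passes over silently. Your direction of travel is also the tidier one: rather than manipulating at argument $s$ and rescaling at the end (the paper's ``this suggests'' step), you collapse both sides of the stated identity directly, which makes the logical equivalence between the theorem and the core recursion transparent.
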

\begin{proof}
Note 
\[
\log(\frac{\zeta(s-2)}{\zeta(3-s)}) =  \log(\frac{\zeta(\frac{s-2}{2})}{\zeta(1 -\frac{s-2}{2})} \frac{\zeta(\frac{s-1}{2})}{\zeta(1 - (\frac{s-1}{2}))} ) - (s-\frac{5}{2})\log(2)- \log(\frac{\sin(\frac{\pi (\frac{s-1}{2})}{2})}{\sin(\frac{\pi (1-\frac{s-2}{2})}{2})}), 
\]
and \footnote{$\frac{\zeta(s-2)}{\zeta(3-s)} =  \frac{(s-2)(s-1)}{(2 \pi i)^2}\frac{\zeta(s)}{\zeta(1-s)} $ - an independent discovery by the author similar in comparison to one of Henrik Stenlund's results\cite{Hen11}. }.\\
\[ 
\log( \frac{(s-2)(s-1)}{(2 \pi i)^2}\frac{\zeta(s)}{\zeta(1-s)})  =  \log(\frac{\zeta(\frac{s-2}{2})}{\zeta(1 -\frac{s-2}{2})} \frac{\zeta(\frac{s-1}{2})}{\zeta(1 - (\frac{s-1}{2}))} ) - (s-\frac{5}{2})\log(2)- \log(\frac{\sin(\frac{\pi (\frac{s-1}{2})}{2})}{\sin(\frac{\pi (1-\frac{s-2}{2})}{2})}) 
\]

$\Rightarrow$
\[
\log(\frac{\zeta(s)}{\zeta(1-s)}) =  \log(\frac{\zeta(\frac{s}{2})}{\zeta(1 -\frac{s}{2})} \frac{\zeta(\frac{s+1}{2})}{\zeta(1 - (\frac{s+1}{2}))} ) - (s-\frac{1}{2})\log(2)- \log(\frac{\sin(\frac{\pi (\frac{s+1}{2})}{2})}{\sin(\frac{\pi (1-\frac{s}{2})}{2})})
\]

\begin{equation}
\begin{split}
\log(\frac{\zeta(\frac{s}{2})}{\zeta(1 -\frac{s}{2})} \frac{\zeta(\frac{s+1}{2})}{\zeta(1 - (\frac{s+1}{2}))} )  =  \log(\frac{\zeta(\frac{s-2}{2})}{\zeta(1 -\frac{s-2}{2})} \frac{\zeta(\frac{s-1}{2})}{\zeta(1 - (\frac{s-1}{2}))} ) +\\
+2\log(2)
-\log( \frac{(s-2)(s-1)}{(2 \pi i)^2} 
+\log(\frac{\sin(\frac{\pi (\frac{s+1}{2})}{2})}{\sin(\frac{\pi (1-\frac{s}{2})}{2})}) 
- \log(\frac{\sin(\frac{\pi (\frac{s-1}{2})}{2})}{\sin(\frac{\pi (1-\frac{s-2}{2})}{2})}) 
\end{split}
\end{equation}~\\
This suggests

\begin{equation}
\begin{split}
\log( \frac{\zeta(s+\frac{1}{2})}{\zeta(\frac{1}{2} - s)} \frac{\zeta(s)}{\zeta(1 -s)})  =  \log( \frac{\zeta(s-\frac{1}{2})}{\zeta(\frac{3}{2} - s)} \frac{\zeta(s-1)}{\zeta(2 -s)})  \\
-\log( \frac{(s-1)(s-\frac{1}{2})}{(2 \pi i)^2} 
+\log(\frac{\sin(\frac{\pi}{2} (s+\frac{1}{2}))}{\sin(\frac{\pi}{2} (1-s))}) 
- \log(\frac{\sin(\frac{\pi}{2} (s-\frac{1}{2}))}{\sin(\frac{\pi}{2} (2-s))}) 
\end{split}
\end{equation}
from which the identity

\[
\begin{split}
\log( \frac{\zeta(s+\frac{1}{2})}{\zeta(\frac{1}{2} - s)} \frac{\zeta(s)}{\zeta(1 -s)})  -\log(\frac{\sin(\frac{\pi}{2} (s+\frac{1}{2}))}{\sin(\frac{\pi}{2} (1-s))})  = \log( \frac{\zeta(s-\frac{1}{2})}{\zeta(\frac{3}{2} - s)} \frac{\zeta(s-1)}{\zeta(2 -s)}) -\log( \frac{(s-1)(s-\frac{1}{2})}{(2 \pi i)^2}
 - \log(\frac{\sin(\frac{\pi}{2} (s-\frac{1}{2}))}{\sin(\frac{\pi}{2} (2-s))}) 
\end{split}
\]
is derived.
\end{proof}

\subsection{Zeta at 1+s, -s, 1-s and s}

\begin{theorem}\label{logzeta}
\textcolor{blue}{
\begin{equation}
\begin{split}
 \log(\frac{\zeta^2(1+s)}{\zeta^2(-s)} ) = 
-\log(\frac{\zeta^2(1-s)}{\zeta^2(s)} ) 
-\log(\frac{s^2}{(2\pi i)^2} )+\\
- \log(\frac{\sin(\frac{\pi (\frac{s+2}{2})}{2})}{\sin(\frac{\pi (1-\frac{s+1}{2})}{2})})
+  \log(\frac{\sin(\frac{\pi (\frac{s+1}{2})}{2})}{\sin(\frac{\pi (1-\frac{s}{2})}{2})})
-\log(\frac{\sin(\frac{\pi (\frac{s}{2})}{2})}{\sin(\frac{\pi (1-\frac{s-1}{2})}{2})})
+\log(\frac{\sin(\frac{\pi (\frac{s-1}{2})}{2})}{\sin(\frac{\pi (1-\frac{s-2}{2})}{2})})
\end{split}
\end{equation}
}
\end{theorem}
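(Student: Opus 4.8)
The plan is to assemble the claimed identity from the four decompositions collected in Lemma \ref{LemmaMulti}, which I will abbreviate $\mathrm{Id}_1,\dots,\mathrm{Id}_4$ in the order listed there (so $\mathrm{Id}_1$ is the equation for $\log(\zeta(s+1)/\zeta(-s))$, $\mathrm{Id}_2$ for $\log(\zeta(s)/\zeta(1-s))$, and so on), and I will write $A,B,C,D$ for the four sine-ratio logarithms appearing in them, in the same order. First I would form the alternating combination $\mathrm{Id}_1-\mathrm{Id}_2+\mathrm{Id}_3-\mathrm{Id}_4$. Setting $z_k=\log\bigl(\zeta(\tfrac{s+k}{2})/\zeta(1-\tfrac{s+k}{2})\bigr)$, the four inner zeta-products are $z_1+z_2$, $z_0+z_1$, $z_{-1}+z_0$, $z_{-2}+z_{-1}$, so in the alternating sum the middle terms cancel and they telescope to $z_2-z_{-2}$. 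At the same time the coefficients of $\log 2$ collapse to $-2\log 2$, and since each $\mathrm{Id}_k$ carries $-(\text{its sine term})$, the sine logarithms reassemble into precisely $-A+B-C+D$, which is term-for-term the sine part of the statement.

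This produces the bookkeeping identity
\[
L=(z_2-z_{-2})-2\log 2+(-A+B-C+D),\qquad
L=\log\tfrac{\zeta(s+1)}{\zeta(-s)}-\log\tfrac{\zeta(s)}{\zeta(1-s)}+\log\tfrac{\zeta(s-1)}{\zeta(2-s)}-\log\tfrac{\zeta(s-2)}{\zeta(3-s)}.
\]
The next task is to remove the auxiliary half-argument term $z_2-z_{-2}$ and to collapse $L$ to the two squared ratios in the theorem. For this I would use two shift relations coming from the functional equation in the multiplicative form obtained by exponentiating Formula \ref{Formula1}, namely the footnote relation $\zeta(s-2)/\zeta(3-s)=\tfrac{(s-2)(s-1)}{(2\pi i)^2}\,\zeta(s)/\zeta(1-s)$ and its companion $\zeta(s+1)/\zeta(-s)=\tfrac{(2\pi i)^2}{s(s-1)}\,\zeta(s-1)/\zeta(2-s)$; both reduce to elementary $\sin(\tfrac{\pi(w+2)}{2})=-\sin(\tfrac{\pi w}{2})$ and a ratio of Gamma factors. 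Applying the companion relation at half-argument $w=\tfrac{s}{2}$ gives $z_2-z_{-2}=\log\tfrac{(2\pi i)^2}{\frac{s}{2}(\frac{s}{2}-1)}=\log\tfrac{4(2\pi i)^2}{s(s-2)}$, whence $z_2-z_{-2}-2\log 2=\log\tfrac{(2\pi i)^2}{s(s-2)}$.

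It remains to reduce $L$. Substituting the companion relation (at $w=s$) for $\zeta(s-1)/\zeta(2-s)$ and the footnote relation for $\zeta(s-2)/\zeta(3-s)$ converts $L$ into $2\log(\zeta(s+1)/\zeta(-s))-2\log(\zeta(s)/\zeta(1-s))+\log\tfrac{s}{s-2}$, the rational residue $\log\tfrac{s}{s-2}$ arising from $-\log\tfrac{(2\pi i)^2}{s(s-1)}-\log\tfrac{(s-2)(s-1)}{(2\pi i)^2}$. Equating the two forms of $L$ and moving $\log\tfrac{s}{s-2}$ across, the factor $(s-2)$ cancels and the constant collapses to $\log\tfrac{(2\pi i)^2}{s^2}=-\log\tfrac{s^2}{(2\pi i)^2}$. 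Finally I would rewrite $2\log(\zeta(s+1)/\zeta(-s))=\log(\zeta^2(1+s)/\zeta^2(-s))$ and $-2\log(\zeta(s)/\zeta(1-s))=\log(\zeta^2(1-s)/\zeta^2(s))$, which reproduces the stated equality exactly, sine terms included.

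The telescoping and the sine/$\log 2$ bookkeeping are purely mechanical; the part I expect to be genuinely delicate is the handling of the complex logarithm. Since $(2\pi i)^2=-4\pi^2$ and the sine products carry their own signs—indeed one checks that $-A+B-C+D=\log\bigl(-\cot^2(\tfrac{\pi s}{2})\bigr)$, so a hidden $\log(-1)$ is present—the identity is only clean modulo integer multiples of $2\pi i$. I would therefore either fix a branch of $\log$ compatible with the functional equation on a suitable strip and track principal values, or read the statement as an identity of multivalued functions, in keeping with the spirit of the earlier theorems of this section.
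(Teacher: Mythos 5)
Your proposal is correct and follows essentially the same route as the paper: the alternating combination $\mathrm{Id}_1-\mathrm{Id}_2+\mathrm{Id}_3-\mathrm{Id}_4$ of Lemma \ref{LemmaMulti}, telescoping of the half-argument zeta terms, and repeated use of the footnote recursion $\frac{\zeta(s-2)}{\zeta(3-s)}=\frac{(s-2)(s-1)}{(2\pi i)^2}\frac{\zeta(s)}{\zeta(1-s)}$ to collapse everything to the two squared ratios and the factor $-\log\frac{s^2}{(2\pi i)^2}$; the only difference is organizational (you evaluate $L$ and $z_2-z_{-2}$ separately, while the paper groups the six terms pairwise), which changes nothing of substance. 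Your closing remark on branches of the complex logarithm is a genuine point of rigor that the paper silently ignores, and reading the identity modulo $2\pi i$ (or as an identity of multivalued functions) is indeed the right way to make it precise.
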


\begin{proof}
According to lemma \ref{LemmaMulti},

\begin{equation}
\begin{split}
  \log(\frac{\zeta(s+1)}{\zeta(-s)}) -\log(\frac{\zeta(s)}{\zeta(1-s)})+\log(\frac{\zeta(s-1)}{\zeta(2-s)}) -\log(\frac{\zeta(s - 2)}{\zeta(3-s)})-\biggr( \log( \frac{\zeta(\frac{s+2}{2})}{\zeta(1 - \frac{s+2}{2})} )-\log( \frac{\zeta(\frac{s-2}{2})}{\zeta(1 - \frac{s-2}{2})} )\biggr)\\
=
-2\log 2 
- \log(\frac{\sin(\frac{\pi (\frac{s+2}{2})}{2})}{\sin(\frac{\pi (1-\frac{s+1}{2})}{2})})
+  \log(\frac{\sin(\frac{\pi (\frac{s+1}{2})}{2})}{\sin(\frac{\pi (1-\frac{s}{2})}{2})})
-\log(\frac{\sin(\frac{\pi (\frac{s}{2})}{2})}{\sin(\frac{\pi (1-\frac{s-1}{2})}{2})})
+\log(\frac{\sin(\frac{\pi (\frac{s-1}{2})}{2})}{\sin(\frac{\pi (1-\frac{s-2}{2})}{2})})
\end{split}
\end{equation}

The recursion relations between the first six terms may be combined to produce:
\begin{equation}
\begin{split}
\log(\frac{{\bf (s-1)s}}{(2\pi i)^2}\frac{\zeta^2(s+1)}{\zeta^2(-s)}) 
-\log(\frac{{\bf (s-2)(s-1)}}{(2\pi i)^2} \frac{\zeta^2(s)}{\zeta^2(1-s)})
-\biggr(\log( \frac{\zeta(\frac{s+2}{2})}{\zeta(1 - \frac{s+2}{2})} ) - \log( \frac{{\bf (\frac{s}{2}-1)\frac{s}{2}}}{(2\pi i)^2} \frac{\zeta(\frac{s+2}{2})}{\zeta(1 - \frac{s+2}{2})} )\biggr)\\
=
-2\log 2 
- \log(\frac{\sin(\frac{\pi (\frac{s+2}{2})}{2})}{\sin(\frac{\pi (1-\frac{s+1}{2})}{2})})
+  \log(\frac{\sin(\frac{\pi (\frac{s+1}{2})}{2})}{\sin(\frac{\pi (1-\frac{s}{2})}{2})})
-\log(\frac{\sin(\frac{\pi (\frac{s}{2})}{2})}{\sin(\frac{\pi (1-\frac{s-1}{2})}{2})})
+\log(\frac{\sin(\frac{\pi (\frac{s-1}{2})}{2})}{\sin(\frac{\pi (1-\frac{s-2}{2})}{2})})
\end{split}
\end{equation}

or alternatively,

\begin{equation}
\begin{split}
\log(\frac{{\bf (s-1)s}}{(2\pi i)^2}\frac{\zeta^2(s+1)}{\zeta^2(-s)}) 
-\log(\frac{{\bf (s-2)(s-1)}}{(2\pi i)^2} \frac{\zeta^2(s)}{\zeta^2(1-s)})
-\biggr( \log( \frac{(2\pi i)^2}{{\bf -\frac{s}{2}(1-\frac{s}{2})}} \frac{\zeta(\frac{s-2}{2})}{\zeta(1 - \frac{s-2}{2})} )-\log( \frac{\zeta(\frac{s-2}{2})}{\zeta(1 - \frac{s-2}{2})} )\biggr)\\
=
-2\log 2 
- \log(\frac{\sin(\frac{\pi (\frac{s+2}{2})}{2})}{\sin(\frac{\pi (1-\frac{s+1}{2})}{2})})
+  \log(\frac{\sin(\frac{\pi (\frac{s+1}{2})}{2})}{\sin(\frac{\pi (1-\frac{s}{2})}{2})})
-\log(\frac{\sin(\frac{\pi (\frac{s}{2})}{2})}{\sin(\frac{\pi (1-\frac{s-1}{2})}{2})})
+\log(\frac{\sin(\frac{\pi (\frac{s-1}{2})}{2})}{\sin(\frac{\pi (1-\frac{s-2}{2})}{2})})
\end{split}
\end{equation}
As a result of this aggregation

\begin{equation}\label{interim}
\begin{split}
\log(\frac{{\bf (s-1)s}}{(2\pi i)^2}\frac{\zeta^2(s+1)}{\zeta^2(-s)}) 
-\log(\frac{{\bf (s-2)(s-1)}}{(2\pi i)^2} \frac{\zeta^2(s)}{\zeta^2(1-s)})
+\log( \frac{{\bf (\frac{s}{2}-1)\frac{s}{2}}}{(2\pi i)^2} )\\
=
-2\log 2 
- \log(\frac{\sin(\frac{\pi (\frac{s+2}{2})}{2})}{\sin(\frac{\pi (1-\frac{s+1}{2})}{2})})
+  \log(\frac{\sin(\frac{\pi (\frac{s+1}{2})}{2})}{\sin(\frac{\pi (1-\frac{s}{2})}{2})})
-\log(\frac{\sin(\frac{\pi (\frac{s}{2})}{2})}{\sin(\frac{\pi (1-\frac{s-1}{2})}{2})})
+\log(\frac{\sin(\frac{\pi (\frac{s-1}{2})}{2})}{\sin(\frac{\pi (1-\frac{s-2}{2})}{2})})
\end{split}
\end{equation}
is produced. Further aggregation results in

\begin{equation}
\begin{split}
\log(\frac{{\bf s}}{\bf s-2}\frac{\zeta^2(1+s)}{\zeta^2(-s)} \frac{\zeta^2(1-s)}{\zeta^2(s)}) = 
-\log( \frac{{\bf (\frac{s}{2}-1)\frac{s}{2}}}{(2\pi i)^2} )
-2\log 2 \\
- \log(\frac{\sin(\frac{\pi (\frac{s+2}{2})}{2})}{\sin(\frac{\pi (1-\frac{s+1}{2})}{2})})
+  \log(\frac{\sin(\frac{\pi (\frac{s+1}{2})}{2})}{\sin(\frac{\pi (1-\frac{s}{2})}{2})})
-\log(\frac{\sin(\frac{\pi (\frac{s}{2})}{2})}{\sin(\frac{\pi (1-\frac{s-1}{2})}{2})})
+\log(\frac{\sin(\frac{\pi (\frac{s-1}{2})}{2})}{\sin(\frac{\pi (1-\frac{s-2}{2})}{2})})
\end{split}
\end{equation}

$\Rightarrow$

\begin{equation}
\begin{split}
\log(\frac{\zeta^2(1+s)}{\zeta^2(-s)} \frac{\zeta^2(1-s)}{\zeta^2(s)}) = 
-\log(\frac{{\bf s}}{\bf s-2})
-\log( \frac{{\bf (\frac{s}{2}-1)\frac{s}{2}}}{(2\pi i)^2} )
-2\log 2 \\
- \log(\frac{\sin(\frac{\pi (\frac{s+2}{2})}{2})}{\sin(\frac{\pi (1-\frac{s+1}{2})}{2})})
+  \log(\frac{\sin(\frac{\pi (\frac{s+1}{2})}{2})}{\sin(\frac{\pi (1-\frac{s}{2})}{2})})
-\log(\frac{\sin(\frac{\pi (\frac{s}{2})}{2})}{\sin(\frac{\pi (1-\frac{s-1}{2})}{2})})
+\log(\frac{\sin(\frac{\pi (\frac{s-1}{2})}{2})}{\sin(\frac{\pi (1-\frac{s-2}{2})}{2})})
\end{split}
\end{equation}

$\Rightarrow$

\begin{equation}
\begin{split}
 \log(\frac{\zeta^2(1+s)}{\zeta^2(-s)} \frac{\zeta^2(1-s)}{\zeta^2(s)}) = 
-\log( \frac{2{\bf s^2}}{\bf (s-2)} \frac{{\bf (\frac{s}{2}-1)}}{(2\pi i)^2} )+\\
- \log(\frac{\sin(\frac{\pi (\frac{s+2}{2})}{2})}{\sin(\frac{\pi (1-\frac{s+1}{2})}{2})})
+  \log(\frac{\sin(\frac{\pi (\frac{s+1}{2})}{2})}{\sin(\frac{\pi (1-\frac{s}{2})}{2})})
-\log(\frac{\sin(\frac{\pi (\frac{s}{2})}{2})}{\sin(\frac{\pi (1-\frac{s-1}{2})}{2})})
+\log(\frac{\sin(\frac{\pi (\frac{s-1}{2})}{2})}{\sin(\frac{\pi (1-\frac{s-2}{2})}{2})})
\end{split}
\end{equation}
and finally

\[
\begin{split}
 \log(\frac{\zeta^2(1+s)}{\zeta^2(-s)} ) = 
-\log(\frac{\zeta^2(1-s)}{\zeta^2(s)} ) 
-\log(\frac{s^2}{(2\pi i)^2} )+\\
- \log(\frac{\sin(\frac{\pi (\frac{s+2}{2})}{2})}{\sin(\frac{\pi (1-\frac{s+1}{2})}{2})})
+  \log(\frac{\sin(\frac{\pi (\frac{s+1}{2})}{2})}{\sin(\frac{\pi (1-\frac{s}{2})}{2})})
-\log(\frac{\sin(\frac{\pi (\frac{s}{2})}{2})}{\sin(\frac{\pi (1-\frac{s-1}{2})}{2})})
+\log(\frac{\sin(\frac{\pi (\frac{s-1}{2})}{2})}{\sin(\frac{\pi (1-\frac{s-2}{2})}{2})}).
\end{split}
\]
\end{proof}

\subsection{On the decomposition of the derivatives of logarithm of zeta}
Here we would like to prove the following

\begin{theorem}
\textcolor{blue}{
\begin{equation}
\begin{split}
\log(\frac{{\bf (s-2)(s-1)}}{(2\pi i)^2} \frac{\zeta^2(s)}{\zeta^2(1-s)}) \\
=
\log(  \frac{\bf \zeta(\frac{s+2}{2})}{\bf \zeta(1 - \frac{s+2}{2})}
 \frac{\zeta(\frac{s+1}{2})}{\zeta(1 - \frac{s+1}{2})} 
 \frac{\zeta(\frac{s}{2})}{\zeta(1 - \frac{s}{2})}
 \frac{\zeta(\frac{s-1}{2})}{\zeta(1 - \frac{s-1}{2})} )
+\log( \frac{{\bf (\frac{s}{2}-1)\frac{s}{2}}}{(2\pi i)^2} )
-({\bf 2s-3})\log 2 +\\
-  \log(\frac{\sin(\frac{\pi (\frac{s+1}{2})}{2})}{\sin(\frac{\pi (1-\frac{s}{2})}{2})})
-\log(\frac{\sin(\frac{\pi (\frac{s-1}{2})}{2})}{\sin(\frac{\pi (1-\frac{s-2}{2})}{2})})
\\
=
\log(  
 \frac{\zeta(\frac{s+1}{2})}{\zeta(1 - \frac{s+1}{2})} 
 \frac{\zeta(\frac{s}{2})}{\zeta(1 - \frac{s}{2})}
 \frac{\zeta(\frac{s-1}{2})}{\zeta(1 - \frac{s-1}{2})}
  \frac{\bf \zeta(\frac{s-2}{2})}{\bf \zeta(1 - \frac{s-2}{2})} )
-({\bf 2s-3})\log 2 
-  \log(\frac{\sin(\frac{\pi (\frac{s+1}{2})}{2})}{\sin(\frac{\pi (1-\frac{s}{2})}{2})})
-\log(\frac{\sin(\frac{\pi (\frac{s-1}{2})}{2})}{\sin(\frac{\pi (1-\frac{s-2}{2})}{2})})
\end{split}
\end{equation}
}
\end{theorem}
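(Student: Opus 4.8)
The plan is to reduce the claimed triple identity to the two relevant rows of Lemma~\ref{LemmaMulti} together with the functional-equation ratio identity recorded in the footnote, namely $\frac{\zeta(s-2)}{\zeta(3-s)} = \frac{(s-2)(s-1)}{(2\pi i)^2}\frac{\zeta(s)}{\zeta(1-s)}$. The starting observation is that the argument of the logarithm on the left-hand side factors cleanly, since that footnote identity gives
\[
\frac{(s-2)(s-1)}{(2\pi i)^2}\frac{\zeta^2(s)}{\zeta^2(1-s)} = \frac{\zeta(s-2)}{\zeta(3-s)}\cdot\frac{\zeta(s)}{\zeta(1-s)},
\]
so that the left-hand side equals $\log(\frac{\zeta(s-2)}{\zeta(3-s)}) + \log(\frac{\zeta(s)}{\zeta(1-s)})$.

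First I would apply the second and fourth rows of Lemma~\ref{LemmaMulti} to these two summands respectively and add them. The four half-argument zeta quotients combine into the product
\[
\frac{\zeta(\frac{s+1}{2})}{\zeta(1-\frac{s+1}{2})}\,\frac{\zeta(\frac{s}{2})}{\zeta(1-\frac{s}{2})}\,\frac{\zeta(\frac{s-1}{2})}{\zeta(1-\frac{s-1}{2})}\,\frac{\zeta(\frac{s-2}{2})}{\zeta(1-\frac{s-2}{2})},
\]
the two $\log 2$ coefficients sum to $-(s-\frac{1}{2})-(s-\frac{5}{2}) = -(2s-3)$, and the two sine quotients that remain are exactly the pair appearing in the statement. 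This reproduces the third expression verbatim and thereby establishes the equality of the left-hand side with the final line.

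To pass from the third expression to the middle one I would invoke the same footnote identity, but now evaluated at the shifted point $\frac{s+2}{2}$. Using $\frac{s+2}{2}-2 = \frac{s-2}{2}$, $3-\frac{s+2}{2} = 1-\frac{s-2}{2}$ and $(\frac{s+2}{2}-2)(\frac{s+2}{2}-1) = (\frac{s}{2}-1)\frac{s}{2}$, the identity becomes
\[
\frac{\zeta(\frac{s-2}{2})}{\zeta(1-\frac{s-2}{2})} = \frac{(\frac{s}{2}-1)\frac{s}{2}}{(2\pi i)^2}\,\frac{\zeta(\frac{s+2}{2})}{\zeta(1-\frac{s+2}{2})}.
\]
Substituting this into the four-fold zeta product of the third expression replaces the $\frac{s-2}{2}$ factor by the $\frac{s+2}{2}$ factor while contributing exactly the additive term $\log(\frac{(\frac{s}{2}-1)\frac{s}{2}}{(2\pi i)^2})$, which is the only structural difference between the middle and third expressions; the $\log 2$ and sine terms are left untouched. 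This yields the middle expression and closes the chain.

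I expect the main obstacle to be purely bookkeeping: confirming that the sine-argument shifts and the half-integer $\log 2$ coefficients emerging from rows two and four of Lemma~\ref{LemmaMulti} align precisely with the two sine quotients and with the coefficient $-(2s-3)$ in the statement, and verifying the index arithmetic in the shifted footnote application. No analytic subtlety arises beyond the domain restriction $s\notin\{0,1\}$, together with the avoidance of the zeros and poles of the zeta and sine factors, which is needed for every logarithm in the statement to be well defined.
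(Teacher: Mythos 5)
Your proof is correct, but it takes a genuinely shorter route than the paper's. The paper sums \emph{all four} rows of Lemma~\ref{LemmaMulti} (obtaining an identity with six half-argument zeta blocks and four sine quotients), rewrites pairs of terms via the recursion relation $\frac{\zeta(s-2)}{\zeta(3-s)} = \frac{(s-2)(s-1)}{(2\pi i)^2}\frac{\zeta(s)}{\zeta(1-s)}$, and then \emph{subtracts} the intermediate equation (\ref{interim}) carried over from the proof of Theorem~\ref{logzeta}; this produces $2\log\bigl(\frac{(s-2)(s-1)}{(2\pi i)^2}\frac{\zeta^2(s)}{\zeta^2(1-s)}\bigr)$ on the left, which is then halved and simplified. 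You instead observe at the outset that the footnote identity factors the left-hand side as $\log\frac{\zeta(s-2)}{\zeta(3-s)} + \log\frac{\zeta(s)}{\zeta(1-s)}$, so that only rows two and four of Lemma~\ref{LemmaMulti} are needed, and the $\log 2$ coefficients $-(s-\tfrac12)-(s-\tfrac52) = -(2s-3)$ and the two sine quotients fall out immediately; your passage from the third expression to the middle one, via the footnote identity evaluated at $\frac{s+2}{2}$, coincides with the identity the paper invokes at the very end of its proof. What your approach buys is independence from Theorem~\ref{logzeta} and its intermediate equation (\ref{interim}), a halving of the bookkeeping (two lemma rows instead of four, no doubled equation to divide by two), and a cleaner logical structure; what the paper's approach buys is that the four-row aggregate and (\ref{interim}) were already on hand from the preceding section, so it reuses displayed machinery rather than introducing a separate factorization step.
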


\begin{proof}
Again, according to lemma \ref{LemmaMulti},
\begin{equation}
\begin{split}
  \log(\frac{\zeta(s+1)}{\zeta(-s)}) +\log(\frac{\zeta(s)}{\zeta(1-s)})+\log(\frac{\zeta(s-1)}{\zeta(2-s)}) +\log(\frac{\zeta(s - 2)}{\zeta(3-s)})-
\biggr( 
\log( \frac{\zeta(\frac{s+2}{2})}{\zeta(1 - \frac{s+2}{2})} ) +
\log( \frac{\zeta(\frac{s-2}{2})}{\zeta(1 - \frac{s-2}{2})} )
\biggr)\\
-\biggr( 
\log( \frac{\zeta^2(\frac{s+1}{2})}{\zeta^2(1 - \frac{s+1}{2})} ) +
\log( \frac{\zeta^2(\frac{s}{2})}{\zeta^2(1 - \frac{s}{2})} )+
\log( \frac{\zeta^2(\frac{s-1}{2})}{\zeta^2(1 - \frac{s-1}{2})} )
\biggr)\\
=
-(4s-4)\log 2 
- \log(\frac{\sin(\frac{\pi (\frac{s+2}{2})}{2})}{\sin(\frac{\pi (1-\frac{s+1}{2})}{2})})
-  \log(\frac{\sin(\frac{\pi (\frac{s+1}{2})}{2})}{\sin(\frac{\pi (1-\frac{s}{2})}{2})})
-\log(\frac{\sin(\frac{\pi (\frac{s}{2})}{2})}{\sin(\frac{\pi (1-\frac{s-1}{2})}{2})})
-\log(\frac{\sin(\frac{\pi (\frac{s-1}{2})}{2})}{\sin(\frac{\pi (1-\frac{s-2}{2})}{2})})
\end{split}
\end{equation}

$\Rightarrow$

\begin{equation}
\begin{split}
\log(\frac{{\bf (s-1)s}}{(2\pi i)^2}\frac{\zeta^2(s+1)}{\zeta^2(-s)}) 
+\log(\frac{{\bf (s-2)(s-1)}}{(2\pi i)^2} \frac{\zeta^2(s)}{\zeta^2(1-s)})
-
\biggr( 
\log( \frac{\zeta(\frac{s+2}{2})}{\zeta(1 - \frac{s+2}{2})} ) +
\log( \frac{\zeta(\frac{s-2}{2})}{\zeta(1 - \frac{s-2}{2})} )
\biggr)\\
-\biggr( 
\log( \frac{\zeta^2(\frac{s+1}{2})}{\zeta^2(1 - \frac{s+1}{2})} ) +
\log( \frac{\zeta^2(\frac{s}{2})}{\zeta^2(1 - \frac{s}{2})} )+
\log( \frac{\zeta^2(\frac{s-1}{2})}{\zeta^2(1 - \frac{s-1}{2})} )
\biggr)\\
=
-(4s-4)\log 2 
- \log(\frac{\sin(\frac{\pi (\frac{s+2}{2})}{2})}{\sin(\frac{\pi (1-\frac{s+1}{2})}{2})})
-  \log(\frac{\sin(\frac{\pi (\frac{s+1}{2})}{2})}{\sin(\frac{\pi (1-\frac{s}{2})}{2})})
-\log(\frac{\sin(\frac{\pi (\frac{s}{2})}{2})}{\sin(\frac{\pi (1-\frac{s-1}{2})}{2})})
-\log(\frac{\sin(\frac{\pi (\frac{s-1}{2})}{2})}{\sin(\frac{\pi (1-\frac{s-2}{2})}{2})}).
\end{split}
\end{equation}
From this last equation we subtract eq. \ref{interim}:
\[
\begin{split}
\log(\frac{{\bf (s-1)s}}{(2\pi i)^2}\frac{\zeta^2(s+1)}{\zeta^2(-s)}) 
-\log(\frac{{\bf (s-2)(s-1)}}{(2\pi i)^2} \frac{\zeta^2(s)}{\zeta^2(1-s)})
+\log( \frac{{\bf (\frac{s}{2}-1)\frac{s}{2}}}{(2\pi i)^2} )\\
=
-2\log 2 
- \log(\frac{\sin(\frac{\pi (\frac{s+2}{2})}{2})}{\sin(\frac{\pi (1-\frac{s+1}{2})}{2})})
+  \log(\frac{\sin(\frac{\pi (\frac{s+1}{2})}{2})}{\sin(\frac{\pi (1-\frac{s}{2})}{2})})
-\log(\frac{\sin(\frac{\pi (\frac{s}{2})}{2})}{\sin(\frac{\pi (1-\frac{s-1}{2})}{2})})
+\log(\frac{\sin(\frac{\pi (\frac{s-1}{2})}{2})}{\sin(\frac{\pi (1-\frac{s-2}{2})}{2})})
\end{split}
\]
to yield

\begin{equation}
\begin{split}
2\log(\frac{{\bf (s-2)(s-1)}}{(2\pi i)^2} \frac{\zeta^2(s)}{\zeta^2(1-s)})
=\\
\biggr( 
\log( \frac{\zeta(\frac{s+2}{2})}{\zeta(1 - \frac{s+2}{2})} ) +
\log( \frac{\zeta(\frac{s-2}{2})}{\zeta(1 - \frac{s-2}{2})} )
\biggr)
+\biggr( 
\log( \frac{\zeta^2(\frac{s+1}{2})}{\zeta^2(1 - \frac{s+1}{2})} ) +
\log( \frac{\zeta^2(\frac{s}{2})}{\zeta^2(1 - \frac{s}{2})} )+
\log( \frac{\zeta^2(\frac{s-1}{2})}{\zeta^2(1 - \frac{s-1}{2})} )
\biggr)\\
+\log( \frac{{\bf (\frac{s}{2}-1)\frac{s}{2}}}{(2\pi i)^2} )
-({\bf 4s-6})\log 2 
-  2\log(\frac{\sin(\frac{\pi (\frac{s+1}{2})}{2})}{\sin(\frac{\pi (1-\frac{s}{2})}{2})})
-2\log(\frac{\sin(\frac{\pi (\frac{s-1}{2})}{2})}{\sin(\frac{\pi (1-\frac{s-2}{2})}{2})})
\end{split}
\end{equation}

which shows the decomposition relation of $\log(\frac{\zeta^2(s)}{\zeta^2(1-s)} ) $ as 


\begin{equation}
\begin{split}
2\log(\frac{{\bf (s-2)(s-1)}}{(2\pi i)^2} \frac{\zeta^2(s)}{\zeta^2(1-s)})
=\\
\biggr( 
\log(  \frac{{\bf (\frac{s}{2}-1)\frac{s}{2}}}{(2\pi i)^2} \frac{\zeta^2(\frac{s+2}{2})}{\zeta^2(1 - \frac{s+2}{2})} ) \biggr)
+\biggr( 
\log( \frac{\zeta^2(\frac{s+1}{2})}{\zeta^2(1 - \frac{s+1}{2})} ) +
\log( \frac{\zeta^2(\frac{s}{2})}{\zeta^2(1 - \frac{s}{2})} )+
\log( \frac{\zeta^2(\frac{s-1}{2})}{\zeta^2(1 - \frac{s-1}{2})} )
\biggr)\\
+\log( \frac{{\bf (\frac{s}{2}-1)\frac{s}{2}}}{(2\pi i)^2} )
-({\bf 4s-6})\log 2 
-  2\log(\frac{\sin(\frac{\pi (\frac{s+1}{2})}{2})}{\sin(\frac{\pi (1-\frac{s}{2})}{2})})
-2\log(\frac{\sin(\frac{\pi (\frac{s-1}{2})}{2})}{\sin(\frac{\pi (1-\frac{s-2}{2})}{2})})
\end{split}
\end{equation}

simplified to 

\begin{equation}
\begin{split}
\log(\frac{{\bf (s-2)(s-1)}}{(2\pi i)^2} \frac{\zeta^2(s)}{\zeta^2(1-s)})
=
\log(  \frac{\bf \zeta(\frac{s+2}{2})}{\bf \zeta(1 - \frac{s+2}{2})}
 \frac{\zeta(\frac{s+1}{2})}{\zeta(1 - \frac{s+1}{2})} 
 \frac{\zeta(\frac{s}{2})}{\zeta(1 - \frac{s}{2})}
 \frac{\zeta(\frac{s-1}{2})}{\zeta(1 - \frac{s-1}{2})} )\\
+\log( \frac{{\bf (\frac{s}{2}-1)\frac{s}{2}}}{(2\pi i)^2} )
-({\bf 2s-3})\log 2 
-  \log(\frac{\sin(\frac{\pi (\frac{s+1}{2})}{2})}{\sin(\frac{\pi (1-\frac{s}{2})}{2})})
-\log(\frac{\sin(\frac{\pi (\frac{s-1}{2})}{2})}{\sin(\frac{\pi (1-\frac{s-2}{2})}{2})})
\end{split}
\end{equation}

or

\begin{equation}
\begin{split}
\log(\frac{{\bf (s-2)(s-1)}}{(2\pi i)^2} \frac{\zeta^2(s)}{\zeta^2(1-s)})
=
\log(  
 \frac{\zeta(\frac{s+1}{2})}{\zeta(1 - \frac{s+1}{2})} 
 \frac{\zeta(\frac{s}{2})}{\zeta(1 - \frac{s}{2})}
 \frac{\zeta(\frac{s-1}{2})}{\zeta(1 - \frac{s-1}{2})}
  \frac{\bf \zeta(\frac{s-2}{2})}{\bf \zeta(1 - \frac{s-2}{2})} )\\
-({\bf 2s-3})\log 2 
-  \log(\frac{\sin(\frac{\pi (\frac{s+1}{2})}{2})}{\sin(\frac{\pi (1-\frac{s}{2})}{2})})
-\log(\frac{\sin(\frac{\pi (\frac{s-1}{2})}{2})}{\sin(\frac{\pi (1-\frac{s-2}{2})}{2})})
\end{split}
\end{equation}

depending on the identity

\begin{equation}
\begin{split}
  \frac{\bf \zeta(\frac{s-2}{2})}{\bf \zeta(1 - \frac{s-2}{2})} )=
\log( \frac{{\bf (\frac{s}{2}-1)\frac{s}{2}}}{(2\pi i)^2} ) +
\log(  \frac{\bf \zeta(\frac{s+2}{2})}{\bf \zeta(1 - \frac{s+2}{2})}.
\end{split}
\end{equation}

\end{proof}

\begin{lemma} The following identity is true.
\textcolor{blue}{
\begin{equation}\label{sinecosine}
\begin{split}
\log(\frac{\sin(\frac{\pi}{2} ({\bf \frac{s}{2}}+\frac{1}{2}))}{\cos(\frac{\pi}{2} ({\bf \frac{s}{2}}))})
=\log(\frac{\sin(\frac{\pi}{2} ({\bf \frac{s-4}{2}}+\frac{1}{2}))}{\cos(\frac{\pi}{2} ({\bf \frac{s-4}{2}}))}).
\end{split}
\end{equation}
}
\end{lemma}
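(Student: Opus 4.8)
The plan is to reduce the claimed logarithmic identity to the statement that the two arguments of the logarithms coincide as functions of $s$, and then to verify this equality of ratios directly using the elementary period-$\pi$ shift identities for the sine and cosine. Because the logarithm is applied to the very same ratio on both sides once the arguments are shown to agree, no branch-of-logarithm subtlety will intervene and the identity follows at once.

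First I would introduce the abbreviation $u = \frac{s}{2}$, so that the left-hand argument is $\frac{\sin(\frac{\pi}{2}(u+\frac12))}{\cos(\frac{\pi}{2}u)}$, while the right-hand argument, using $\frac{s-4}{2} = u-2$, is $\frac{\sin(\frac{\pi}{2}(u-2+\frac12))}{\cos(\frac{\pi}{2}(u-2))}$. Writing everything in terms of the single angle $\theta = \frac{\pi u}{2}$, the numerator and denominator angles on the left become $\theta + \frac{\pi}{4}$ and $\theta$, whereas on the right they become $\theta - \frac{3\pi}{4} = (\theta+\frac{\pi}{4})-\pi$ and $\theta - \pi$. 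This change of variable is the bookkeeping that exposes the underlying structure.

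The decisive step is then the observation that replacing $s$ by $s-4$ shifts \emph{both} the sine argument and the cosine argument by exactly $-\pi$. Applying $\sin(\phi - \pi) = -\sin\phi$ to the numerator and $\cos(\phi - \pi) = -\cos\phi$ to the denominator produces a factor of $-1$ in each, and these two factors cancel in the quotient. Hence the right-hand ratio equals $\frac{-\sin(\theta+\frac{\pi}{4})}{-\cos\theta} = \frac{\sin(\theta+\frac{\pi}{4})}{\cos\theta}$, which is precisely the left-hand ratio; taking logarithms of the two equal quantities yields the asserted identity.

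I do not anticipate a genuine obstacle: the content of the lemma is simply the period-$4$ invariance in $s$ (equivalently period-$\pi$ invariance in the internal angle $\theta$) of this particular sine-over-cosine combination. The only point deserving minor care is the verification that the shift by $4$ in $s$ lands as a clean simultaneous shift by $-\pi$ in \emph{both} trigonometric factors, so that the two sign changes cancel rather than accumulate; it is exactly this synchronisation that renders the combination invariant instead of merely sign-reversing, and confirming it is the crux of the (very short) argument.
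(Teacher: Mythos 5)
Your proof is correct, but it takes a genuinely different route from the paper's. You verify the identity directly: writing $\theta = \frac{\pi s}{4}$, the substitution $s \rightarrow s-4$ shifts both the numerator angle ($\theta + \frac{\pi}{4}$) and the denominator angle ($\theta$) by exactly $-\pi$, and the two sign factors from $\sin(\phi - \pi) = -\sin\phi$ and $\cos(\phi - \pi) = -\cos\phi$ cancel in the quotient, so the two arguments of the logarithm coincide identically and the equality of logarithms is immediate. The paper, by contrast, derives the identity as a by-product of its zeta-functional-equation machinery: it combines the decomposition of $\log(\frac{\zeta^2(s)}{\zeta^2(1-s)})$ from the preceding theorem with the identity of Lemma \ref{LemmaMulti}, applies the shift $s \rightarrow s-2$ to eq. \ref{res}, substitutes back, and invokes the recursion $\frac{\zeta(s-2)}{\zeta(3-s)} = \frac{(s-2)(s-1)}{(2\pi i)^2}\frac{\zeta(s)}{\zeta(1-s)}$; after all zeta ratios and polynomial factors cancel, what survives is precisely the claimed sine--cosine relation (the paper's eq. \ref{sine}). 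Your approach buys brevity, self-containedness, and immunity to the delicate points in the paper's chain (poles of $\zeta$, branches of logarithms of complex ratios, and the footnoted recursion relation, which the paper does not prove); the paper's approach, though far more laborious, exhibits the lemma as a consistency condition that its zeta decompositions at $s$ and at $s-4$ must satisfy, which is evidently why it is stated there. As a proof of the lemma as stated, yours is the more economical and the more rigorous one.
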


\begin{proof}

\begin{equation}
\begin{split}
\log(\frac{\zeta^2(s)}{\zeta^2(1-s)})
=
\log(  
 \frac{\zeta(\frac{s+1}{2})}{\zeta(1 - \frac{s+1}{2})} 
 \frac{\zeta(\frac{s}{2})}{\zeta(1 - \frac{s}{2})}
 \frac{\zeta(\frac{s-1}{2})}{\zeta(1 - \frac{s-1}{2})}
  \frac{\bf \zeta(\frac{s-2}{2})}{\bf \zeta(1 - \frac{s-2}{2})} )\\
-\log(\frac{{\bf (s-2)(s-1)}}{(2\pi i)^2} )
-({\bf 2s-3})\log 2 
-  \log(\frac{\sin(\frac{\pi (\frac{s+1}{2})}{2})}{\sin(\frac{\pi (1-\frac{s}{2})}{2})})
-\log(\frac{\sin(\frac{\pi (\frac{s-1}{2})}{2})}{\sin(\frac{\pi (1-\frac{s-2}{2})}{2})})
\end{split}
\end{equation}
Recall the identity

\[
\log(\frac{\zeta(s)}{\zeta(1-s)}) =  \log(\frac{\zeta(\frac{s}{2})}{\zeta(1 -\frac{s}{2})} \frac{\zeta(\frac{s+1}{2})}{\zeta(1 - (\frac{s+1}{2}))} ) - (s-\frac{1}{2})\log(2)- \log(\frac{\sin(\frac{\pi (\frac{s+1}{2})}{2})}{\sin(\frac{\pi (1-\frac{s}{2})}{2})}),
\]
therefore

\begin{equation}
\begin{split}
\log(\frac{\zeta^2(\frac{s}{2})}{\zeta^2(1 -\frac{s}{2})} \frac{\zeta^2(\frac{s+1}{2})}{\zeta^2(1 - (\frac{s+1}{2}))} ) - (2s-1)\log(2)- 2\log(\frac{\sin(\frac{\pi (\frac{s+1}{2})}{2})}{\sin(\frac{\pi (1-\frac{s}{2})}{2})}) =\\
\log(  
 \frac{\zeta(\frac{s+1}{2})}{\zeta(1 - \frac{s+1}{2})} 
 \frac{\zeta(\frac{s}{2})}{\zeta(1 - \frac{s}{2})}
 \frac{\zeta(\frac{s-1}{2})}{\zeta(1 - \frac{s-1}{2})}
  \frac{\bf \zeta(\frac{s-2}{2})}{\bf \zeta(1 - \frac{s-2}{2})} )\\
-\log(\frac{{\bf (s-2)(s-1)}}{(2\pi i)^2} )
-({\bf 2s-3})\log 2 
-  \log(\frac{\sin(\frac{\pi (\frac{s+1}{2})}{2})}{\sin(\frac{\pi (1-\frac{s}{2})}{2})})
-\log(\frac{\sin(\frac{\pi (\frac{s-1}{2})}{2})}{\sin(\frac{\pi (1-\frac{s-2}{2})}{2})})
\end{split}
\end{equation}
implying

\begin{equation}
\begin{split}
\log(\frac{\zeta(\frac{s}{2})}{\zeta(1 -\frac{s}{2})} \frac{\zeta(\frac{s+1}{2})}{\zeta(1 - (\frac{s+1}{2}))} ) - (2s-1)\log(2)- 2\log(\frac{\sin(\frac{\pi (\frac{s+1}{2})}{2})}{\sin(\frac{\pi (1-\frac{s}{2})}{2})}) =\\
\log(  \frac{\zeta(\frac{s-1}{2})}{\zeta(1 - \frac{s-1}{2})}
  \frac{\zeta(\frac{s-2}{2})}{\zeta(1 - \frac{s-2}{2})} )
-\log(\frac{{\bf (s-2)(s-1)}}{(2\pi i)^2} )
-({\bf 2s-3})\log 2 
-  \log(\frac{\sin(\frac{\pi (\frac{s+1}{2})}{2})}{\sin(\frac{\pi (1-\frac{s}{2})}{2})})
-\log(\frac{\sin(\frac{\pi (\frac{s-1}{2})}{2})}{\sin(\frac{\pi (1-\frac{s-2}{2})}{2})})
\end{split}
\end{equation}

\begin{equation}\label{res}
\begin{split}
\log(\frac{\zeta(\frac{s+1}{2})}{\zeta(1 -\frac{s+1}{2})} \frac{\zeta(\frac{s}{2})}{\zeta(1 - (\frac{s}{2}))} ) =
\log(  \frac{\zeta(\frac{s-1}{2})}{\zeta(1 - \frac{s-1}{2})}
  \frac{\zeta(\frac{s-2}{2})}{\zeta(1 - \frac{s-2}{2})} )+
({\bf 2})\log 2 
-\log(\frac{{\bf (s-2)(s-1)}}{(2\pi i)^2} )+\\
+ \log(\frac{\sin(\frac{\pi (\frac{s+1}{2})}{2})}{\sin(\frac{\pi (1-\frac{s}{2})}{2})})
-\log(\frac{\sin(\frac{\pi (\frac{s-1}{2})}{2})}{\sin(\frac{\pi (1-\frac{s-2}{2})}{2})})
\end{split}
\end{equation}

Changing s such that $s \rightarrow s-2$ leads to: \\
\begin{equation}
\begin{split}
\log(\frac{\zeta(\frac{s-1}{2})}{\zeta(1 -\frac{s-1}{2})} \frac{\zeta(\frac{s-2}{2})}{\zeta(1 - (\frac{s-2}{2}))} ) =
\log(  \frac{\zeta(\frac{s-3}{2})}{\zeta(1 - \frac{s-3}{2})}
  \frac{\zeta(\frac{s-4}{2})}{\zeta(1 - \frac{s-4}{2})} )+\\
+({\bf 2})\log 2 
-\log(\frac{{\bf (s-4)(s-3)}}{(2\pi i)^2} )
+ \log(\frac{\sin(\frac{\pi (\frac{s-1}{2})}{2})}{\sin(\frac{\pi (1-\frac{s-2}{2})}{2})})
-\log(\frac{\sin(\frac{\pi (\frac{s-3}{2})}{2})}{\sin(\frac{\pi (1-\frac{s-4}{2})}{2})}).
\end{split}
\end{equation}

Substituting this last result into eq. \ref{res} produces

\begin{equation}
\begin{split}
\log(\frac{\zeta(\frac{s+1}{2})}{\zeta(1 -\frac{s+1}{2})} \frac{\zeta(\frac{s}{2})}{\zeta(1 - (\frac{s}{2}))} ) =
\biggr( 
\log(  \frac{\zeta(\frac{s-3}{2})}{\zeta(1 - \frac{s-3}{2})}
  \frac{\zeta(\frac{s-4}{2})}{\zeta(1 - \frac{s-4}{2})} )+\\
+({\bf 2})\log 2 
-\log(\frac{{\bf (s-4)(s-3)}}{(2\pi i)^2} )
+ \log(\frac{\sin(\frac{\pi (\frac{s-1}{2})}{2})}{\sin(\frac{\pi (1-\frac{s-2}{2})}{2})})
-\log(\frac{\sin(\frac{\pi (\frac{s-3}{2})}{2})}{\sin(\frac{\pi (1-\frac{s-4}{2})}{2})})
\biggr)\\
+({\bf 2})\log 2 
-\log(\frac{{\bf (s-2)(s-1)}}{(2\pi i)^2} )
+ \log(\frac{\sin(\frac{\pi (\frac{s+1}{2})}{2})}{\sin(\frac{\pi (1-\frac{s}{2})}{2})})
-\log(\frac{\sin(\frac{\pi (\frac{s-1}{2})}{2})}{\sin(\frac{\pi (1-\frac{s-2}{2})}{2})})
\end{split}
\end{equation}
and

\begin{equation}
\begin{split}
\log(\frac{\zeta(\frac{s+1}{2})}{\zeta(1 -\frac{s+1}{2})} \frac{\zeta(\frac{s}{2})}{\zeta(1 - (\frac{s}{2}))} ) =
\log(  \frac{\zeta(\frac{s-3}{2})}{\zeta(1 - \frac{s-3}{2})}
  \frac{\zeta(\frac{s-4}{2})}{\zeta(1 - \frac{s-4}{2})} )+\\
+({\bf 4})\log 2 
-\log(\frac{{\bf (s-1)(s-2)(s-3)(s-4)}}{(2\pi i)^4} )
+ \log(\frac{\sin(\frac{\pi (\frac{s+1}{2})}{2})}{\sin(\frac{\pi (1-\frac{s}{2})}{2})})
-\log(\frac{\sin(\frac{\pi (\frac{s-3}{2})}{2})}{\sin(\frac{\pi (1-\frac{s-4}{2})}{2})}).
\end{split}
\end{equation}
Taking an analytical look at the right hand term 

\begin{equation}
\begin{split}
\log(  \frac{\zeta(\frac{s-3}{2})}{\zeta(1 - \frac{s-3}{2})}
  \frac{\zeta(\frac{s-4}{2})}{\zeta(1 - \frac{s-4}{2})} ) =
\log( \frac{ \frac{s-3}{2} \frac{s-1}{2}}{(2\pi i)^2} \frac{\zeta(\frac{s+1}{2})}{\zeta(1 - \frac{s+1}{2})}
  \frac{ \frac{s-4}{2} \frac{s-2}{2}}{(2\pi i)^2} \frac{\zeta(\frac{s}{2})}{\zeta(1 - \frac{s}{2})} )
\end{split}
\end{equation}
implies

\begin{equation}
\begin{split}
\log(\frac{\zeta(\frac{s+1}{2})}{\zeta(1 -\frac{s+1}{2})} \frac{\zeta(\frac{s}{2})}{\zeta(1 - (\frac{s}{2}))} ) =
\log( \frac{ \frac{s-3}{2} \frac{s-1}{2}}{(2\pi i)^2} \frac{\zeta(\frac{s+1}{2})}{\zeta(1 - \frac{s+1}{2})}
  \frac{ \frac{s-4}{2} \frac{s-2}{2}}{(2\pi i)^2} \frac{\zeta(\frac{s}{2})}{\zeta(1 - \frac{s}{2})} )+\\
+({\bf 4})\log 2 
-\log(\frac{{\bf (s-1)(s-2)(s-3)(s-4)}}{(2\pi i)^4} )
+ \log(\frac{\sin(\frac{\pi (\frac{s+1}{2})}{2})}{\sin(\frac{\pi (1-\frac{s}{2})}{2})})
-\log(\frac{\sin(\frac{\pi (\frac{s-3}{2})}{2})}{\sin(\frac{\pi (1-\frac{s-4}{2})}{2})})
\end{split}
\end{equation}
and as a result of further simplification, 

\begin{equation}\label{sine}
\begin{split}
\log(\frac{\sin(\frac{\pi (\frac{s+1}{2})}{2})}{\sin(\frac{\pi (1-\frac{s}{2})}{2})})
=\log(\frac{\sin(\frac{\pi (\frac{ s+1 \bf-4}{2})}{2})}{\sin(\frac{\pi (1-\frac{s\bf -4}{2})}{2})});
\end{split}
\end{equation}
confirming

\begin{equation}
\begin{split}
\log(\frac{\sin(\frac{\pi}{2} ({\bf \frac{s}{2}}+\frac{1}{2}))}{\cos(\frac{\pi}{2} ({\bf \frac{s}{2}}))})
=\log(\frac{\sin(\frac{\pi}{2} ({\bf \frac{s-4}{2}}+\frac{1}{2}))}{\cos(\frac{\pi}{2} ({\bf \frac{s-4}{2}}))}).
\end{split}
\end{equation}

\end{proof}

\section{Further analysis involving the gamma function}
Here, we first prove the following theorem before presenting the related identity in terms of zeta function.
\begin{theorem}
\textcolor{blue}{
\begin{equation}
\begin{split}
\log(\frac{\Gamma(2+\frac{s}{2})}{\Gamma(\frac{1}{2}-2-\frac{s}{2})})
-\log(\frac{\Gamma(2-\frac{s}{2})}{\Gamma(\frac{1}{2}-2+\frac{s}{2})})
=
\log(\frac{\Gamma(\frac{s}{2}-2)}{\Gamma(\frac{1}{2}+2-\frac{s}{2})})
-\log(\frac{\Gamma(-\frac{s}{2}-2)}{\Gamma(\frac{1}{2}+2+\frac{s}{2})})
+\log( \frac{s-4}{s+4})
\end{split}
\end{equation}
}
\end{theorem}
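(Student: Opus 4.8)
The plan is to prove the identity by pure, repeated application of the functional recurrence $\Gamma(z+1)=z\Gamma(z)$, with no appeal to the reflection formula. The key preliminary observation is that, after writing $u=\tfrac{s}{2}$, every $\Gamma$-argument on the left-hand side differs from its natural partner on the right-hand side by exactly an integer---in fact by $4$. Concretely, $2+u$ and $u-2$ differ by $4$; $\tfrac52-u$ and $-\tfrac32-u$ differ by $4$; $2-u$ and $-2-u$ differ by $4$; and $\tfrac52+u$ and $-\tfrac32+u$ differ by $4$. Hence each ``cross-ratio'' of gammas collapses to a product of four consecutive linear factors, and the whole statement reduces to an elementary rational-function verification.

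First I would regroup the target equation so that the four $\Gamma$-ratios are paired across the equals sign: the term $\log\frac{\Gamma(2+u)}{\Gamma(-\frac32-u)}$ against $\log\frac{\Gamma(u-2)}{\Gamma(\frac52-u)}$, and $\log\frac{\Gamma(2-u)}{\Gamma(-\frac32+u)}$ against $\log\frac{\Gamma(-2-u)}{\Gamma(\frac52+u)}$. Applying the recurrence four times to each numerator, I would record
\[
\frac{\Gamma(2+u)}{\Gamma(u-2)}=(u+1)u(u-1)(u-2),\qquad \frac{\Gamma(\tfrac52-u)}{\Gamma(-\tfrac32-u)}=(\tfrac32-u)(\tfrac12-u)(-\tfrac12-u)(-\tfrac32-u),
\]
together with the analogous two products obtained by the substitution $u\mapsto -u$, which govern the second pair.

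Next I would assemble $(\text{LHS})-(\text{RHS without the rational term})$ as a single logarithm $\log(P_1/P_2)$, where $P_1$ and $P_2$ are the degree-eight products arising from the two pairs. Writing each factor in the normalised form $(u\pm c)$ and carefully tracking the signs introduced by descending factors such as $\tfrac12-u=-(u-\tfrac12)$, I expect the four half-integer factors to cancel completely between $P_1$ and $P_2$, and the three integer factors $(u+1),\,u,\,(u-1)$ to cancel as well, leaving precisely $P_1/P_2=\frac{u-2}{u+2}=\frac{s-4}{s+4}$. Substituting this residual quotient back into the regrouped identity then yields the stated equation exactly.

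The main obstacle is purely organisational: the sign bookkeeping when normalising the descending factors, since several factors of $-1$ must be accounted for, and a single miscount would corrupt the final quotient. Once the signs are handled consistently the cancellation is forced, and the surviving rational factor $\frac{s-4}{s+4}$ emerges without further calculation.
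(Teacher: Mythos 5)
Your proposal is correct, and it takes a genuinely different route from the paper's. The paper derives this theorem from its zeta machinery: it starts from Theorem \ref{logzeta}, rewrites the zeta ratios as gamma ratios via the functional equation, writes the resulting identity both at $s$ and at $s-4$ (its eqs. \ref{gam1} and \ref{gam2}), cancels the trigonometric terms using the periodicity identity \ref{sinecosine}, subtracts to reach the intermediate identity \ref{temp1}, and finally performs the substitution $s\to -s$ and subtracts once more. You bypass all of that: pairing the eight gamma arguments across the equality (each pair differing by the integer $4$) and applying only the recurrence $\Gamma(z+1)=z\Gamma(z)$ reduces the claim, with $u=\tfrac{s}{2}$, to the rational identity
\begin{equation*}
\frac{(u+1)u(u-1)(u-2)}{(u-1)u(u+1)(u+2)}\cdot
\frac{\bigl(\tfrac32-u\bigr)\bigl(\tfrac12-u\bigr)\bigl(-\tfrac12-u\bigr)\bigl(-\tfrac32-u\bigr)}
{\bigl(\tfrac32+u\bigr)\bigl(\tfrac12+u\bigr)\bigl(u-\tfrac12\bigr)\bigl(u-\tfrac32\bigr)}
=\frac{u-2}{u+2},
\end{equation*}
in which the half-integer block is identically $1$ (since $(-\tfrac12-u)(-\tfrac32-u)=(\tfrac12+u)(\tfrac32+u)$ and $(u-\tfrac12)(u-\tfrac32)=(\tfrac12-u)(\tfrac32-u)$) and the integer block collapses to $\tfrac{u-2}{u+2}=\tfrac{s-4}{s+4}$, exactly as you predicted; your sign bookkeeping is consistent. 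What your route buys is brevity, self-containedness and transparency: it uses no zeta functional equation, no reflection formula, and no trigonometric periodicity lemma, it exposes precisely where the factor $\tfrac{s-4}{s+4}$ comes from (the only linear factors that survive cancellation), and it does not inherit any defects of the paper's earlier, more loosely handled identities. What the paper's route buys is thematic coherence: the theorem appears there as one more consequence of its web of zeta-based identities, which is the paper's expository aim. One caveat applies equally to both arguments: these are identities between logarithms of meromorphic functions, so they hold only up to integer multiples of $2\pi i$ (equivalently, exactly after exponentiation or differentiation); neither the paper nor your proposal tracks branch choices, so neither is more rigorous than the other on that point.
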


\begin{proof}
Due to theorem \ref{logzeta}:


\begin{equation}\label{gam1}
\begin{split}
\log(\frac{\Gamma^2(-\frac{s}{2})}{\Gamma^2(\frac{1+s}{2})}) +
\log(\pi^{2(1+s-\frac{1}{2})})=
-\log( \frac{\Gamma^2(\frac{s}{2})}{\Gamma^2(\frac{1-s}{2})})
-\log(\pi^{2(1-s-\frac{1}{2})})
-\log( \frac{2{\bf s^2}}{\bf (s-2)} \frac{{\bf (\frac{s}{2}-1)}}{(2\pi i)^2} )+\\
\textcolor{blue}{
- \log(\frac{\sin(\frac{\pi (\frac{s+2}{2})}{2})}{\sin(\frac{\pi (1-\frac{s+1}{2})}{2})})
+  \log(\frac{\sin(\frac{\pi (\frac{s+1}{2})}{2})}{\sin(\frac{\pi (1-\frac{s}{2})}{2})})
-\log(\frac{\sin(\frac{\pi (\frac{s}{2})}{2})}{\sin(\frac{\pi (1-\frac{s-1}{2})}{2})})
+\log(\frac{\sin(\frac{\pi (\frac{s-1}{2})}{2})}{\sin(\frac{\pi (1-\frac{s-2}{2})}{2})}).
}
\end{split}
\end{equation}
Also

\begin{equation}\label{gam2}
\begin{split}
\log(\frac{\Gamma^2(-\frac{(s-4)}{2})}{\Gamma^2(\frac{1+s-4}{2})}) +
\log(\pi^{2(1+s-4-\frac{1}{2})}) \\
=
-\log( \frac{\Gamma^2(\frac{s-4}{2})}{\Gamma^2(\frac{1-(s-4)}{2})})
-\log(\pi^{2(1-(s-4)-\frac{1}{2})})
-\log( \frac{2{\bf (s-4)^2}}{\bf (s-4-2)} \frac{{\bf (\frac{s-4}{2}-1)}}{(2\pi i)^2} )+\\
\textcolor{blue}{
- \log(\frac{\sin(\frac{\pi (\frac{s+2}{2})}{2})}{\sin(\frac{\pi (1-\frac{s+1}{2})}{2})})
+  \log(\frac{\sin(\frac{\pi (\frac{s+1}{2})}{2})}{\sin(\frac{\pi (1-\frac{s}{2})}{2})})
-\log(\frac{\sin(\frac{\pi (\frac{s}{2})}{2})}{\sin(\frac{\pi (1-\frac{s-1}{2})}{2})})
+\log(\frac{\sin(\frac{\pi (\frac{s-1}{2})}{2})}{\sin(\frac{\pi (1-\frac{s-2}{2})}{2})}),
}
\end{split}
\end{equation}
because of the following identities which are derived in relation to the identity \ref{sine}:\\
\[
\log(\frac{\sin(\frac{\pi (\frac{s+2}{2})}{2})}{\sin(\frac{\pi (1-\frac{s+1}{2})}{2})})
=\log(\frac{\sin(\frac{\pi (\frac{ {\bf s-4}+2}{2})}{2})}{\sin(\frac{\pi (1-\frac{{\bf s-4}+1}{2})}{2})});
\]
\[
\log(\frac{\sin(\frac{\pi (\frac{s+1}{2})}{2})}{\sin(\frac{\pi (1-\frac{s}{2})}{2})})
=\log(\frac{\sin(\frac{\pi (\frac{ {\bf s-4}+1}{2})}{2})}{\sin(\frac{\pi (1-\frac{{\bf s-4}}{2})}{2})})
;
\]
 \[
\vdots
\]
and so on. Subtracting eq. \ref{gam2} from \ref{gam1}:
\begin{equation}
\begin{split}
\log(\frac{\Gamma^2(-\frac{s}{2})}{\Gamma^2(\frac{1+s}{2})}) +
\log(\pi^{2(1+s-\frac{1}{2})}) 
- \biggr( \log(\frac{\Gamma^2(-\frac{(s-4)}{2})}{\Gamma^2(\frac{1+s-4}{2})}) +\log(\pi^{2(1+s-4-\frac{1}{2})}) \biggr) =\\
-\log( \frac{\Gamma^2(\frac{s}{2})}{\Gamma^2(\frac{1-s}{2})})
-\log(\pi^{2(1-s-\frac{1}{2})})
-\log( \frac{2{\bf s^2}}{\bf (s-2)} \frac{{\bf (\frac{s}{2}-1)}}{(2\pi i)^2} )\\
-\biggr( -\log( \frac{\Gamma^2(\frac{s-4}{2})}{\Gamma^2(\frac{1-(s-4)}{2})})
-\log(\pi^{2(1-(s-4)-\frac{1}{2})})
-\log( \frac{2{\bf (s-4)^2}}{\bf (s-4-2)} \frac{{\bf (\frac{s-4}{2}-1)}}{(2\pi i)^2} ) \biggr)
\end{split}
\end{equation}

$\Rightarrow$

\begin{equation}
\begin{split}
\log(\frac{\Gamma^2(-\frac{s}{2})}{\Gamma^2(\frac{1+s}{2})}) 
- \log(\frac{\Gamma^2(2-\frac{s}{2})}{\Gamma^2(\frac{1+s}{2}-2)})  = 
-\log( \frac{\Gamma^2(\frac{s}{2})}{\Gamma^2(\frac{1-s}{2})})
-\log( \frac{{\bf s^2}}{\bf (s-2)} {\bf (\frac{s}{2}-1)} )+\\
+\log( \frac{\Gamma^2(\frac{s}{2}-2)}{\Gamma^2(2+\frac{1-s}{2})})
+\log( \frac{{\bf (s-4)^2}}{\bf (s-6)} {\bf (\frac{s}{2}-3)} )
\end{split}
\end{equation}

$\Rightarrow$

\begin{equation}
\begin{split}
\log(\frac{\Gamma^2(-\frac{s}{2})}{\Gamma^2(\frac{1+s}{2})})+\log( \frac{\Gamma^2(\frac{s}{2})}{\Gamma^2(\frac{1-s}{2})}) +\log( \frac{{\bf s^2}}{\bf (s-2)} {\bf (\frac{s}{2}-1)} ) \\= 
\log(\frac{\Gamma^2(2-\frac{s}{2})}{\Gamma^2(\frac{1+s}{2}-2)})  
+\log( \frac{\Gamma^2(\frac{s}{2}-2)}{\Gamma^2(2+\frac{1-s}{2})})
+\log( \frac{{\bf (s-4)^2}}{\bf (s-6)} {\bf (\frac{s}{2}-3)} )
\end{split}
\end{equation}

$\Rightarrow$

\begin{equation}
\begin{split}
\log(\frac{\Gamma^2(-\frac{s}{2})}{\Gamma^2(\frac{1+s}{2})})+\log( \frac{\Gamma^2(\frac{s}{2})}{\Gamma^2(\frac{1-s}{2})}) +\log( \frac{{\bf s^2}}{\bf 2} ) = 
\log(\frac{\Gamma^2(2-\frac{s}{2})}{\Gamma^2(\frac{1+s}{2}-2)})  
+\log( \frac{\Gamma^2(\frac{s}{2}-2)}{\Gamma^2(2+\frac{1-s}{2})})
+\log( \frac{{\bf (s-4)^2}}{\bf 2})
\end{split}
\end{equation}

$\Rightarrow$

\begin{equation}
\begin{split}
\log(\frac{\Gamma^2(-\frac{s}{2})}{\Gamma^2(\frac{1+s}{2})})+\log( \frac{\Gamma^2(\frac{s}{2})}{\Gamma^2(\frac{1-s}{2})}) +\log( s^2 ) = 
\log(\frac{\Gamma^2(2-\frac{s}{2})}{\Gamma^2(\frac{1+s}{2}-2)})  
+\log( \frac{\Gamma^2(\frac{s}{2}-2)}{\Gamma^2(2+\frac{1-s}{2})})
+\log( (s-4)^2)
\end{split}
\end{equation}

$\Rightarrow$

\begin{equation}\label{temp1}
\begin{split}
\log(\frac{\Gamma(-\frac{s}{2})}{\Gamma(\frac{1}{2}+\frac{s}{2})})
+\log(\frac{\Gamma(\frac{s}{2})}{\Gamma(\frac{1}{2}-\frac{s}{2})}) 
+\log( s) 
= 
\log(\frac{\Gamma(2-\frac{s}{2})}{\Gamma(\frac{1}{2}-2+\frac{s}{2})})
+\log(\frac{\Gamma(\frac{s}{2}-2)}{\Gamma(\frac{1}{2}+2-\frac{s}{2})})
+\log( s-4)
\end{split}
\end{equation}

$s \rightarrow -s$:
\begin{equation}\label{temp2}
\begin{split}
\log(\frac{\Gamma(-\frac{s}{2})}{\Gamma(\frac{1}{2}+\frac{s}{2})})
+\log(\frac{\Gamma(\frac{s}{2})}{\Gamma(\frac{1}{2}-\frac{s}{2})}) 
+\log( -s) 
= 
\log(\frac{\Gamma(2+\frac{s}{2})}{\Gamma(\frac{1}{2}-2-\frac{s}{2})})
+\log(\frac{\Gamma(-\frac{s}{2}-2)}{\Gamma(\frac{1}{2}+2+\frac{s}{2})})
+\log( -s-4).
\end{split}
\end{equation}

Subtracting eq. \ref{temp2} from \ref{temp1}:

\begin{equation}
\begin{split}
\log(\frac{\Gamma(2+\frac{s}{2})}{\Gamma(\frac{1}{2}-2-\frac{s}{2})})
+\log(\frac{\Gamma(-\frac{s}{2}-2)}{\Gamma(\frac{1}{2}+2+\frac{s}{2})})
= 
\log(\frac{\Gamma(2-\frac{s}{2})}{\Gamma(\frac{1}{2}-2+\frac{s}{2})})
+\log(\frac{\Gamma(\frac{s}{2}-2)}{\Gamma(\frac{1}{2}+2-\frac{s}{2})})
+\log( \frac{s-4}{s+4})
\end{split}
\end{equation}

Therefore

\[
\begin{split}
\log(\frac{\Gamma(2+\frac{s}{2})}{\Gamma(\frac{1}{2}-2-\frac{s}{2})})
-\log(\frac{\Gamma(2-\frac{s}{2})}{\Gamma(\frac{1}{2}-2+\frac{s}{2})})
=
\log(\frac{\Gamma(\frac{s}{2}-2)}{\Gamma(\frac{1}{2}+2-\frac{s}{2})})
-\log(\frac{\Gamma(-\frac{s}{2}-2)}{\Gamma(\frac{1}{2}+2+\frac{s}{2})})
+\log( \frac{s-4}{s+4})
\end{split}
\]

or

\[
\begin{split}
-\log(\frac{\Gamma(2+\frac{s}{2})}{\Gamma(\frac{1}{2}-2-\frac{s}{2})})
+\log(\frac{\Gamma(2-\frac{s}{2})}{\Gamma(\frac{1}{2}-2+\frac{s}{2})})
=
-\log(\frac{\Gamma(\frac{s}{2}-2)}{\Gamma(\frac{1}{2}+2-\frac{s}{2})})
+\log(\frac{\Gamma(-\frac{s}{2}-2)}{\Gamma(\frac{1}{2}+2+\frac{s}{2})})
+\log( \frac{s+4}{s-4})
\end{split}
\]

as a result of  $s \rightarrow -s$ substitution. 
\end{proof}

\subsection{Logarithm of zeta at $\pm s \pm 4$ and $1-(\pm s \pm 4)$}
The related zeta version of the last equation may be presented in form of 
\begin{equation}
\begin{split}
-\log(\frac{\zeta(s+4)}{\zeta(-3-s)}\pi^{\frac{1}{2}-(s+4)})
+\log(\frac{\zeta(-s+4)}{\zeta(-3+s)}\pi^{\frac{1}{2}-(-s+4)}) \\
=
-\log(\frac{\zeta(s-4)}{\zeta(5-s)}\pi^{\frac{1}{2}-(s-4)})
+\log(\frac{\zeta(-s-4)}{\zeta(5+s)}\pi^{\frac{1}{2}-(-s-4)})
+\log( \frac{s-4}{s+4})
\end{split}
\end{equation}

$\Rightarrow$

\begin{equation}
\begin{split}
-\log(\frac{\zeta(s+4)}{\zeta(-3-s)})
+\log(\frac{\zeta(-s+4)}{\zeta(-3+s)})
+\log(\frac{\pi^{\frac{1}{2}-(-s+4)}}{\pi^{\frac{1}{2}-(s+4)}})\\
=
-\log(\frac{\zeta(s-4)}{\zeta(5-s)})
+\log(\frac{\zeta(-s-4)}{\zeta(5+s)})
+\log(\frac{\pi^{\frac{1}{2}-(-s-4)}}{\pi^{\frac{1}{2}-(s-4)}})
+\log( \frac{s-4}{s+4})
\end{split}
\end{equation}

$\Rightarrow$

\begin{equation}
\begin{split}
-\log(\frac{\zeta(s+4)}{\zeta(-3-s)})
+\log(\frac{\zeta(-s+4)}{\zeta(-3+s)})
+\log(\pi^{2s})
=
-\log(\frac{\zeta(s-4)}{\zeta(5-s)})
+\log(\frac{\zeta(-s-4)}{\zeta(5+s)})
+\log(\pi^{2s})
+\log( \frac{s-4}{s+4})
\end{split}
\end{equation}

$\Rightarrow$

\textcolor{blue}{
\begin{equation}\label{zet4}
\begin{split}
-\log(\frac{\zeta(s+4)}{\zeta(-3-s)})
+\log(\frac{\zeta(-s+4)}{\zeta(-3+s)})
=
-\log(\frac{\zeta(s-4)}{\zeta(5-s)})
+\log(\frac{\zeta(-s-4)}{\zeta(5+s)})
+\log( \frac{s-4}{s+4}).
\end{split}
\end{equation}
}

This last statement is not difficult to justify. To this aim, we use the recursion relation substitutes as demonstrated:
\begin{equation}
\begin{split}
-\log(\frac{\zeta(s+4)}{\zeta(-3-s)})
+\log(\frac{\zeta(-s+4)}{\zeta(-3+s)})
=
-\log(\frac{\zeta(s-4)}{\zeta(5-s)})
+\log(\frac{\zeta(-s-4)}{\zeta(5+s)})
+\log( \frac{s-4}{s+4})
\\
=
-\log( \frac{(s-4)(s-3) (s-2)(s-1)(s)(s+1) (s+2)(s+3)}{(2 \pi i)^4} \frac{\zeta(s+4)}{\zeta(-3-s)}) 
\\
+\log(\frac{(-s-4)(-s-3) (-s-2)(-s-1)(-s)(-s+1) (-s+2)(-s+3)}{(2 \pi i)^4} \frac{\zeta(-s+4)}{\zeta(-3+s)})
\\
+\log( \frac{s-4}{s+4})
\end{split}
\end{equation}
which then confirms the obvious (but almost unreported) identity 

\begin{equation}
\begin{split}
\log( \frac{(s-4)(s-3) (s-2)(s-1)(s)(s+1) (s+2)(s+3)}{(2 \pi i)^4} ) 
\\
-\log(\frac{(-s-4)(-s-3) (-s-2)(-s-1)(-s)(-s+1) (-s+2)(-s+3)}{(2 \pi i)^4} )
\\
=\log( \frac{s-4}{s+4}).
\end{split}
\end{equation}
presented here for the first time ever.


\subsection{Logarithm of zeta at $\pm s$ and $1\mp s$}
Another important discovery involving eq. \ref{zet4} is on the proof of the next theorem.

\begin{theorem}
\textcolor{blue}{
\small
\begin{equation}
\begin{split}
\log(\frac{\zeta(s)}{\zeta(1-s)})
+\log(\frac{\zeta(-s)}{\zeta(1+s)})
\\
=
\log(\frac{\zeta(-s+8)}{\zeta(-7+s)})
+\log(\frac{\zeta(s-8)}{\zeta(9-s)})
-\log( \frac{s-8}{s})
\\
\textcolor{black}{
=
\log(\frac{\zeta(-s+16)}{\zeta(-15+s)})
+\log(\frac{\zeta(s-16)}{\zeta(17-s)})
-\log( \frac{s-16}{s-8})
+\log( \frac{-s}{-s+8})
}
\\
=
\log(\frac{\zeta(s+8)}{\zeta(-7-s)})
+\log(\frac{\zeta(-s-8)}{\zeta(9+s)})
-\log( \frac{-s-8}{-s})
\\
\textcolor{black}{
=
\log(\frac{\zeta(s+16)}{\zeta(-15-s)})
+\log(\frac{\zeta(-s-16)}{\zeta(17+s)})
-\log( \frac{-s-16}{-s-8})
+\log( \frac{s}{s+8})
}
\end{split}
\end{equation}
}
\end{theorem}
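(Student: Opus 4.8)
The plan is to reduce all four asserted equalities to a single ``shift-by-$8$'' recursion. Introduce the shorthand
\[
G(s) := \log\Bigl(\frac{\zeta(s)}{\zeta(1-s)}\Bigr) + \log\Bigl(\frac{\zeta(-s)}{\zeta(1+s)}\Bigr),
\]
which is exactly the left-hand side of the theorem; since $1-(-s)=1+s$, the second summand is the same object evaluated at $-s$, so $G$ is even in $s$. The first step is to observe that each of the four right-hand blocks is, after checking that every pair $\zeta(a)/\zeta(b)$ really satisfies $b=1-a$, nothing but $G$ evaluated at a shifted argument plus an elementary rational correction. Because $1-(8-s)=s-7$ and $1-(s-8)=9-s$, the first block equals $G(s-8)-\log\frac{s-8}{s}$; likewise the third block equals $G(s+8)-\log\frac{s+8}{s}$, while the second and fourth blocks equal $G(s-16)$ and $G(s+16)$ together with telescoping correction terms.

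The key identity is the master recursion
\[
G(s) = G(s-8) - \log\frac{s-8}{s},
\]
which I would obtain directly from equation \ref{zet4}. Writing $R(a)=\log(\zeta(a)/\zeta(1-a))$, that equation reads $R(s-4)-R(s+4)+R(-s+4)-R(-s-4)=\log\frac{s-4}{s+4}$, and the substitution $s\mapsto s-4$ converts its left side into $G(s-8)-G(s)$ and its right side into $\log\frac{s-8}{s}$. Granting this, the first equality is immediate, and the third follows by replacing $s$ by $s+8$ in the master recursion, so that $G(s)=G(s+8)+\log\frac{s}{s+8}=G(s+8)-\log\frac{s+8}{s}$. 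The second and fourth equalities come from applying the master recursion a second time to $G(s-8)$ and $G(s+8)$ respectively; the two resulting correction logarithms combine precisely into the pairs $-\log\frac{s-16}{s-8}+\log\frac{-s}{-s+8}$ and $-\log\frac{-s-16}{-s-8}+\log\frac{s}{s+8}$ appearing in the statement, once one uses $\frac{-s}{-s+8}=\frac{s}{s-8}$ and $\frac{-s-16}{-s-8}=\frac{s+16}{s+8}$.

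The only real labour, and hence the step I expect to be the main obstacle, is the bookkeeping: one must verify for every block that the two zeta arguments are genuine reflections $a\leftrightarrow 1-a$, and then simplify each rational correction by cancelling the sign factors $(-a)=-(a)$ in products such as $(-s)(-s+1)\cdots$ against the matching $(s-1)(s-2)\cdots$. This is where the seemingly elaborate factors collapse: the eightfold products attached to $R(s)\to R(s\mp 8)$ differ from their $-s$ counterparts only by the single ratio $\frac{s}{s-8}$ (respectively $\frac{s+8}{s}$), which is exactly the rational term recorded in the theorem. No analytic input beyond the already-established functional-equation consequence \ref{zet4} is required; the entire argument is the telescoping of these corrections.
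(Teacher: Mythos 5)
Your proposal is correct and takes essentially the same route as the paper: the paper likewise obtains the shift-by-8 relation (its eq. \ref{zetap}) from eq. \ref{zet4} via the substitution $s \mapsto s-4$, and then produces the 16-shift lines by substituting $s \mapsto -s+8$ and $s \mapsto s+8$ back into that relation. Your packaging via the even function $G(s)$ and an iterated master recursion $G(s)=G(s-8)-\log\frac{s-8}{s}$ is only a tidier presentation of the identical computation.
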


\begin{proof}
The fact that 
\[
\begin{split}
-\log(\frac{\zeta(s+4)}{\zeta(-3-s)})
+\log(\frac{\zeta(-s+4)}{\zeta(-3+s)})
=
-\log(\frac{\zeta(s-4)}{\zeta(5-s)})
+\log(\frac{\zeta(-s-4)}{\zeta(5+s)})
+\log( \frac{s-4}{s+4})
\end{split}
\]
suggests

\begin{equation}
\begin{split}
-\log(\frac{\zeta(s)}{\zeta(1-s)})
+\log(\frac{\zeta(-s+8)}{\zeta(-7+s)})
=
-\log(\frac{\zeta(s-8)}{\zeta(9-s)})
+\log(\frac{\zeta(-s)}{\zeta(1+s)})
+\log( \frac{s-8}{s})
\end{split}
\end{equation}
$\Rightarrow$

\begin{equation}
\begin{split}
-\log(\frac{\zeta(-s)}{\zeta(1+s)})
-\log(\frac{\zeta(s)}{\zeta(1-s)})
=
-\log(\frac{\zeta(-s+8)}{\zeta(-7+s)})
-\log(\frac{\zeta(s-8)}{\zeta(9-s)})
+\log( \frac{s-8}{s})
\end{split}
\end{equation}
$\Rightarrow$

\begin{equation}
\begin{split}
\log(\frac{\zeta(-s)}{\zeta(1+s)})
+\log(\frac{\zeta(s)}{\zeta(1-s)})
=
\log(\frac{\zeta(-s+8)}{\zeta(-7+s)})
+\log(\frac{\zeta(s-8)}{\zeta(9-s)})
-\log( \frac{s-8}{s})
\end{split}
\end{equation}
This implies

\textcolor{blue}{
\begin{equation}\label{zetap}
\begin{split}
\log(\frac{\zeta(s)}{\zeta(1-s)})
+\log(\frac{\zeta(-s)}{\zeta(1+s)})
=
\log(\frac{\zeta(-s+8)}{\zeta(-7+s)})
+\log(\frac{\zeta(s-8)}{\zeta(9-s)})
-\log( \frac{s-8}{s})
\\
=
\log(\frac{\zeta(s+8)}{\zeta(-7-s)})
+\log(\frac{\zeta(-s-8)}{\zeta(9+s)})
-\log( \frac{-s-8}{-s}).
\end{split}
\end{equation}
}
Substituting $s \rightarrow -s+8$ into \ref{zetap}:

\begin{equation}
\begin{split}
\log(\frac{\zeta(-s+8)}{\zeta(-7+s)})
+\log(\frac{\zeta(s-8)}{\zeta(9-s)})
=
\log(\frac{\zeta(s)}{\zeta(1-s)})
+\log(\frac{\zeta(-s)}{\zeta(1+s)})
-\log( \frac{-s}{-s+8})
\\
=
\log(\frac{\zeta(-s+16)}{\zeta(-15+s)})
+\log(\frac{\zeta(s-16)}{\zeta(17-s)})
-\log( \frac{s-16}{s-8}).
\end{split}
\end{equation}
Substituting $s \rightarrow s+8$ into \ref{zetap}:

\begin{equation}
\begin{split}
\log(\frac{\zeta(s+8)}{\zeta(-7-s)})
+\log(\frac{\zeta(-s-8)}{\zeta(9+s)})
=
\log(\frac{\zeta(-s)}{\zeta(1+s)})
+\log(\frac{\zeta(s)}{\zeta(1-s)})
-\log( \frac{s}{s+8})
\\
=
\log(\frac{\zeta(s+16)}{\zeta(-15-s)})
+\log(\frac{\zeta(-s-16)}{\zeta(17+s)})
-\log( \frac{-s-16}{-s-8}).
\end{split}
\end{equation}
In other words,
{
\small
\[
\begin{split}
\log(\frac{\zeta(s)}{\zeta(1-s)})
+\log(\frac{\zeta(-s)}{\zeta(1+s)})
\\
=
\log(\frac{\zeta(-s+8)}{\zeta(-7+s)})
+\log(\frac{\zeta(s-8)}{\zeta(9-s)})
-\log( \frac{s-8}{s})
\\
\textcolor{black}{
=
\log(\frac{\zeta(-s+16)}{\zeta(-15+s)})
+\log(\frac{\zeta(s-16)}{\zeta(17-s)})
-\log( \frac{s-16}{s-8})
+\log( \frac{-s}{-s+8})
}
\\
=
\log(\frac{\zeta(s+8)}{\zeta(-7-s)})
+\log(\frac{\zeta(-s-8)}{\zeta(9+s)})
-\log( \frac{-s-8}{-s})
\\
\textcolor{black}{
=
\log(\frac{\zeta(s+16)}{\zeta(-15-s)})
+\log(\frac{\zeta(-s-16)}{\zeta(17+s)})
-\log( \frac{-s-16}{-s-8})
+\log( \frac{s}{s+8})
}
\end{split}
\]
}
\end{proof}


\section{An introduction to the new omega function $\Omega(s)$}
Here, we wish to present and prove 
\begin{theorem}
\textcolor{blue}{
\begin{equation}
\begin{split}
\Omega(s) = 
\psi^{(2n)}(\frac{s}{2}) + \psi^{(2n)}(\frac{1-s}{2}) - 
\psi^{(2n)}(1-\frac{s}{2}) - \psi^{(2n)}(\frac{1+s}{2})
=\\
\biggr( \psi^{(2n)}(\frac{s}{2}) - \psi^{(2n)}(\frac{1}{2}+\frac{s}{2}) \biggr)+ 
\biggr(\psi^{(2n)}(\frac{1-s}{2}) - 
 \psi^{(2n)}(\frac{1}{2}+\frac{1-s}{2}) \biggr)
=\\
(-2)^{2n+1}\Gamma(2n+1)\biggr(\sum^{\infty}_{k=0}{ \frac{(-1)^k} {(k+s)^{2n+1} } } +
\sum^{\infty}_{k=0}{ \frac{(-1)^k} {(k+1-s)^{2n+1} } } \biggr) =\\
2^{2n}\frac{d^{2n+1}}{ds^{2n+1}} \biggr(
- \log(\frac{\sin(\frac{\pi (\frac{s+2}{2})}{2})}{\cos(\frac{\pi (\frac{s+1}{2})}{2})})
+  \log(\frac{\sin(\frac{\pi (\frac{s+1}{2})}{2})}{\cos(\frac{\pi (\frac{s}{2})}{2})})
-\log(\frac{\sin(\frac{\pi (\frac{s}{2})}{2})}{\cos(\frac{\pi (\frac{s-1}{2})}{2})})
+\log(\frac{\sin(\frac{\pi (\frac{s-1}{2})}{2})}{\cos(\frac{\pi (\frac{s-2}{2})}{2})})
\biggr)
= \\
-\Gamma(2n+1) \biggr(
\sum^{\infty}_{k=0}{ \frac{1} {(k+\frac{s}{2})^{2n+1} } }+
\sum^{\infty}_{k=0}{ \frac{1} {(k+\frac{1-s}{2})^{2n+1} } }-
\sum^{\infty}_{k=0}{ \frac{1} {(k+1-\frac{s}{2})^{2n+1} } }-
\sum^{\infty}_{k=0}{ \frac{1} {(k+\frac{1+s}{2})^{2n+1} } }
\biggr)
\end{split}
\end{equation}
}
\end{theorem}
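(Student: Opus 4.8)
The plan is to read the five expressions as one chain of equalities whose backbone is the standard series representation of the polygamma function, $\psi^{(2n)}(x) = -\,\Gamma(2n+1)\sum_{k=0}^{\infty}(k+x)^{-(2n+1)}$, valid by analytic continuation away from the nonpositive poles. Substituting this representation into the defining combination in the first line immediately yields the last line (the four Hurwitz-type sums), so the equivalence $1\Leftrightarrow 5$ is purely mechanical. The equivalence $1\Leftrightarrow 2$ is also formal: it is the regrouping obtained from the two argument identities $1-\frac{s}{2} = \frac12 + \frac{1-s}{2}$ and $\frac{1+s}{2} = \frac12 + \frac{s}{2}$, which let me pair the four polygamma values into the two bracketed half-shift differences.

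For $2\Leftrightarrow 3$ I would insert the series into each half-shift difference and clear denominators. Writing $k+\frac{s}{2} = \frac{2k+s}{2}$ pulls out a factor $2^{2n+1}$ and leaves $\sum_k (2k+s)^{-(2n+1)}$, while the companion term $\psi^{(2n)}(\frac{1+s}{2})$ contributes $\sum_k(2k+1+s)^{-(2n+1)}$. Interleaving the even and odd denominators merges these into the single alternating series $\sum_{m\ge 0}(-1)^m(m+s)^{-(2n+1)}$. Tracking the prefactor and the identity $-2^{2n+1} = (-2)^{2n+1}$ (legitimate because $2n+1$ is odd) gives $\psi^{(2n)}(\frac{s}{2})-\psi^{(2n)}(\frac{1+s}{2}) = (-2)^{2n+1}\Gamma(2n+1)\sum_{m\ge0}(-1)^m(m+s)^{-(2n+1)}$; applying the same computation with $s$ replaced by $1-s$ and adding recovers the third line.

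The genuine work is the equality with the trigonometric derivative, line $4$. Here I would invoke Theorem \ref{maintheorem2} with the running variable set first to $u=\frac{s}{2}$ and then to $u=\frac{1-s}{2}$. In each case the reflection argument $1-u$ is exactly the partner appearing in the first line, and the chain rule converts the $u$-derivative into an $s$-derivative with factor $2^{2n+1}$ (for $u=\frac s2$) and $(-2)^{2n+1}$ (for $u=\frac{1-s}2$). Using $\cos\frac{\pi u}{2}\sin\frac{\pi u}{2}=\tfrac12\sin(\pi u)$ together with $\sin\frac{\pi(1-s)}{2}=\cos\frac{\pi s}{2}$, the two contributions collapse to $\Omega(s) = 2^{2n+1}\frac{d^{2n+1}}{ds^{2n+1}}\log\cot(\frac{\pi s}{2})$. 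It then remains to identify the explicit four-term trigonometric bracket of the theorem with $2\log\cot(\frac{\pi s}{2})$. I would first rewrite the reflected sine denominators via $\sin(\frac{\pi}{2}(1-u))=\cos(\frac{\pi}{2}u)$, so that the bracket coincides with the trigonometric side of Theorem \ref{logzeta}; call this combination $T(s)$.

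The decisive computational step, and the place I expect the only real friction, is the first-derivative identity $T'(s) = -\,2\pi/\sin(\pi s)$. I would prove it by substituting $\theta=\frac{\pi s}{4}$, collapsing each $\cot(\frac{\pi}{4}(s\pm a))$ and $\cot(\frac{\pi}{4}(c-s))$ through the addition formulas into rational functions of $t=\tan\theta$, and observing that the two groups of four cotangents reduce to the same quantity $\frac{(t^2+1)^2}{t(t^2-1)}$, which the half-angle identities turn into $-4/\sin(4\theta)=-4/\sin(\pi s)$. Since $\frac{d}{ds}\,2\log\cot(\frac{\pi s}{2}) = -2\pi/\sin(\pi s)$ as well, $T(s)$ and $2\log\cot(\frac{\pi s}{2})$ differ by a constant, so their $(2n+1)$-th derivatives agree for $n\ge 1$; multiplying by $2^{2n}$ and comparing with the collapsed form above closes the argument. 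Throughout, the identities hold for $s\notin\mathbb{Z}$ (so that no polygamma or cotangent pole is hit), and the interchange of summation and differentiation is justified by the local uniform convergence of the polygamma series off its poles.
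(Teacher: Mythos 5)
Your proposal is correct, but it reaches the crucial fourth line (the trigonometric-derivative expression) by a genuinely different route than the paper. The paper gets it from its zeta machinery: it starts from Theorem \ref{logzeta}, rewrites the zeta ratios as gamma ratios via the functional equation, differentiates $2n+1$ times so the $\log\Gamma^2$ terms become polygamma values scaled by $2^{-2n}$, and then invokes the recurrence $\psi^{(2n)}(-\tfrac{s}{2}) = -\Gamma(2n+1)/(-\tfrac{s}{2})^{2n+1}+\psi^{(2n)}(1-\tfrac{s}{2})$ so that the pole term exactly cancels the derivative of the leftover $\log(s^2)$; the trigonometric bracket is simply carried along unchanged from Theorem \ref{logzeta}. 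You instead bypass zeta entirely: you apply the reflection identity of Theorem \ref{maintheorem2} at $u=\tfrac{s}{2}$ and $u=\tfrac{1-s}{2}$, collapse the two contributions to $\Omega(s)=2^{2n+1}\tfrac{d^{2n+1}}{ds^{2n+1}}\log\cot(\tfrac{\pi s}{2})$, and then show the paper's four-term bracket equals $2\log\cot(\tfrac{\pi s}{2})$ up to an additive constant by matching first derivatives ($T'(s)=-2\pi/\sin(\pi s)$), which suffices since only derivatives of order $\geq 1$ enter. Both arguments are sound, and your series manipulations for the first, second, third and fifth members of the chain agree with what the paper does implicitly. What your route buys: it is more elementary and self-contained, it exposes the bracket as a disguised $2\log\cot(\tfrac{\pi s}{2})$ (which the paper never makes explicit), and comparing derivatives rather than logarithms cleanly dodges the branch-of-logarithm and sign issues (e.g.\ $\sin(\tfrac{\pi(s-1)}{2})=-\cos(\tfrac{\pi s}{2})$) that the paper glosses over. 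What the paper's route buys is thematic rather than logical: it exhibits $\Omega(s)$ as an output of the zeta functional-equation aggregation that is the subject of the paper, at the cost of the heavier $\log(s^2)$/pole cancellation and implicit branch choices.
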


\begin{proof}
According to theorem \ref{logzeta}:
 
\begin{equation}
\begin{split}
\log(\frac{\Gamma^2(-\frac{s}{2})}{\Gamma^2(\frac{1+s}{2})}) +
\log(\pi^{2(1+s-\frac{1}{2})})=
-\log( \frac{\Gamma^2(\frac{s}{2})}{\Gamma^2(\frac{1-s}{2})})
-\log(\pi^{2(1-s-\frac{1}{2})})
-\log( \frac{2{\bf s^2}}{\bf (s-2)} \frac{{\bf (\frac{s}{2}-1)}}{(2\pi i)^2} )+\\
- \log(\frac{\sin(\frac{\pi (\frac{s+2}{2})}{2})}{\sin(\frac{\pi (1-\frac{s+1}{2})}{2})})
+  \log(\frac{\sin(\frac{\pi (\frac{s+1}{2})}{2})}{\sin(\frac{\pi (1-\frac{s}{2})}{2})})
-\log(\frac{\sin(\frac{\pi (\frac{s}{2})}{2})}{\sin(\frac{\pi (1-\frac{s-1}{2})}{2})})
+\log(\frac{\sin(\frac{\pi (\frac{s-1}{2})}{2})}{\sin(\frac{\pi (1-\frac{s-2}{2})}{2})})
\end{split}
\end{equation}
$\Rightarrow$

\[
\begin{split}
\frac{d^{2n+1}}{ds^{2n+1}}(
\log(\frac{\Gamma^2(-\frac{s}{2})}{\Gamma^2(\frac{1+s}{2})})
) =
-
\frac{d^{2n+1}}{ds^{2n+1}}(
\log( \frac{\Gamma^2(\frac{s}{2})}{\Gamma^2(\frac{1-s}{2})})
)
-
\frac{d^{2n+1}}{ds^{2n+1}} (
\log( \frac{2{\bf s^2}}{\bf (s-2)} \frac{{\bf (\frac{s}{2}-1)}}{(2\pi i)^2} )
)+\\
\frac{d^{2n+1}}{ds^{2n+1}} \biggr(
- \log(\frac{\sin(\frac{\pi (\frac{s+2}{2})}{2})}{\sin(\frac{\pi (1-\frac{s+1}{2})}{2})})
+  \log(\frac{\sin(\frac{\pi (\frac{s+1}{2})}{2})}{\sin(\frac{\pi (1-\frac{s}{2})}{2})})
-\log(\frac{\sin(\frac{\pi (\frac{s}{2})}{2})}{\sin(\frac{\pi (1-\frac{s-1}{2})}{2})})
+\log(\frac{\sin(\frac{\pi (\frac{s-1}{2})}{2})}{\sin(\frac{\pi (1-\frac{s-2}{2})}{2})})
\biggr)
\end{split}
\]
$\Rightarrow$

\textcolor{blue}{
\begin{equation}
\begin{split}
- \frac{\psi^{(2)}(-\frac{s}{2}) + \psi^{(2)}(\frac{1+s}{2})}{2^{2n}} =
-
\frac{\psi^{(2)}(\frac{s}{2}) + \psi^{(2)}(\frac{1-s}{2})}{2^{2n}}
-
\frac{d^{2n+1}}{ds^{2n+1}} (
\log( s^2 )
)+\\
\frac{d^{2n+1}}{ds^{2n+1}} \biggr(
- \log(\frac{\sin(\frac{\pi (\frac{s+2}{2})}{2})}{\sin(\frac{\pi (1-\frac{s+1}{2})}{2})})
+  \log(\frac{\sin(\frac{\pi (\frac{s+1}{2})}{2})}{\sin(\frac{\pi (1-\frac{s}{2})}{2})})
-\log(\frac{\sin(\frac{\pi (\frac{s}{2})}{2})}{\sin(\frac{\pi (1-\frac{s-1}{2})}{2})})
+\log(\frac{\sin(\frac{\pi (\frac{s-1}{2})}{2})}{\sin(\frac{\pi (1-\frac{s-2}{2})}{2})})
\biggr)
\end{split}
\end{equation}
}

Note the following identities:
\[ \psi^{(2n)}({a \over b}) = {(-1)^{2n+1}\Gamma(2n+1) \over ({a \over b})^{2n+1}}+\psi^{(2n)}(1+{a \over b});  \]
\[ \psi^{(2n)}(1+{a \over b}) =  {(-1)^{2n}\Gamma(2n+1) \over ({a \over b})^{2n+1}}+\psi^{(2n)}({a \over b});  \]
i.e.
\[ \psi^{(2n)}(-{s \over 2}) = -{\Gamma(2n+1) \over (-{s \over 2})^{2n+1}}+\psi^{(2n)}(1-{s \over 2}).  \]
Therefore

\begin{equation}
\begin{split}
- \frac{\psi^{(2)}(1-\frac{s}{2}) + \psi^{(2)}(\frac{1+s}{2})}{2^{2n}}=
-
\frac{\psi^{(2)}(\frac{s}{2}) + \psi^{(2)}(\frac{1-s}{2})}{2^{2n}}
- \frac{\Gamma(2n+1)}{2^{2n}(-{s \over 2})^{2n+1}}
-
\frac{d^{2n+1}}{ds^{2n+1}} (
\log( s^2 )
)+\\
\frac{d^{2n+1}}{ds^{2n+1}} \biggr(
- \log(\frac{\sin(\frac{\pi (\frac{s+2}{2})}{2})}{\sin(\frac{\pi (1-\frac{s+1}{2})}{2})})
+  \log(\frac{\sin(\frac{\pi (\frac{s+1}{2})}{2})}{\sin(\frac{\pi (1-\frac{s}{2})}{2})})
-\log(\frac{\sin(\frac{\pi (\frac{s}{2})}{2})}{\sin(\frac{\pi (1-\frac{s-1}{2})}{2})})
+\log(\frac{\sin(\frac{\pi (\frac{s-1}{2})}{2})}{\sin(\frac{\pi (1-\frac{s-2}{2})}{2})})
\biggr)
\end{split}
\end{equation}
$\Rightarrow$

\begin{equation}
\begin{split}
- \frac{\psi^{(2)}(1-\frac{s}{2}) + \psi^{(2)}(\frac{1+s}{2})}{2^{2n}}=
-
\frac{\psi^{(2)}(\frac{s}{2}) + \psi^{(2)}(\frac{1-s}{2})}{2^{2n}}
+ \frac{2\Gamma(2n+1)}{s^{2n+1}}
-
\frac{d^{2n+1}}{ds^{2n+1}} (
\log( s^2 )
)+\\
\frac{d^{2n+1}}{ds^{2n+1}} \biggr(
- \log(\frac{\sin(\frac{\pi (\frac{s+2}{2})}{2})}{\sin(\frac{\pi (1-\frac{s+1}{2})}{2})})
+  \log(\frac{\sin(\frac{\pi (\frac{s+1}{2})}{2})}{\sin(\frac{\pi (1-\frac{s}{2})}{2})})
-\log(\frac{\sin(\frac{\pi (\frac{s}{2})}{2})}{\sin(\frac{\pi (1-\frac{s-1}{2})}{2})})
+\log(\frac{\sin(\frac{\pi (\frac{s-1}{2})}{2})}{\sin(\frac{\pi (1-\frac{s-2}{2})}{2})})
\biggr)
\end{split}
\end{equation}
$\Rightarrow$

\begin{equation}
\begin{split}
- \frac{\psi^{(2)}(1-\frac{s}{2}) + \psi^{(2)}(\frac{1+s}{2})}{2^{2n}}=
-
\frac{\psi^{(2)}(\frac{s}{2}) + \psi^{(2)}(\frac{1-s}{2})}{2^{2n}}
+\\
+\frac{d^{2n+1}}{ds^{2n+1}} \biggr(
- \log(\frac{\sin(\frac{\pi (\frac{s+2}{2})}{2})}{\sin(\frac{\pi (1-\frac{s+1}{2})}{2})})
+  \log(\frac{\sin(\frac{\pi (\frac{s+1}{2})}{2})}{\sin(\frac{\pi (1-\frac{s}{2})}{2})})
-\log(\frac{\sin(\frac{\pi (\frac{s}{2})}{2})}{\sin(\frac{\pi (1-\frac{s-1}{2})}{2})})
+\log(\frac{\sin(\frac{\pi (\frac{s-1}{2})}{2})}{\sin(\frac{\pi (1-\frac{s-2}{2})}{2})})
\biggr)
\end{split}
\end{equation}
$\Rightarrow$

\begin{equation}
\begin{split}
\frac{\psi^{(2)}(\frac{s}{2}) + \psi^{(2)}(\frac{1-s}{2})}{2^{2n}} - \frac{\psi^{(2)}(1-\frac{s}{2}) + \psi^{(2)}(\frac{1+s}{2})}{2^{2n}} 
=\\
\frac{d^{2n+1}}{ds^{2n+1}} \biggr(
- \log(\frac{\sin(\frac{\pi (\frac{s+2}{2})}{2})}{\sin(\frac{\pi (1-\frac{s+1}{2})}{2})})
+  \log(\frac{\sin(\frac{\pi (\frac{s+1}{2})}{2})}{\sin(\frac{\pi (1-\frac{s}{2})}{2})})
-\log(\frac{\sin(\frac{\pi (\frac{s}{2})}{2})}{\sin(\frac{\pi (1-\frac{s-1}{2})}{2})})
+\log(\frac{\sin(\frac{\pi (\frac{s-1}{2})}{2})}{\sin(\frac{\pi (1-\frac{s-2}{2})}{2})})
\biggr)
= \\
-\frac{\Gamma(2n+1)}{2^{2n}} \biggr(
\sum^{\infty}_{k=0}{ \frac{1} {(k+\frac{s}{2})^{2n+1} } }+
\sum^{\infty}_{k=0}{ \frac{1} {(k+\frac{1-s}{2})^{2n+1} } }-
\sum^{\infty}_{k=0}{ \frac{1} {(k+1-\frac{s}{2})^{2n+1} } }-
\sum^{\infty}_{k=0}{ \frac{1} {(k+\frac{1+s}{2})^{2n+1} } }.
\biggr)
\end{split}
\end{equation}
This leads us to establish and define the new function Omega $\Omega(s)$ as

\begin{equation}
\begin{split}
\Omega(s) = 
\psi^{(2)}(\frac{s}{2}) + \psi^{(2)}(\frac{1-s}{2}) - 
\psi^{(2)}(1-\frac{s}{2}) - \psi^{(2)}(\frac{1+s}{2})
=\\
2^{2n}\frac{d^{2n+1}}{ds^{2n+1}} \biggr(
- \log(\frac{\sin(\frac{\pi (\frac{s+2}{2})}{2})}{\sin(\frac{\pi (1-\frac{s+1}{2})}{2})})
+  \log(\frac{\sin(\frac{\pi (\frac{s+1}{2})}{2})}{\sin(\frac{\pi (1-\frac{s}{2})}{2})})
-\log(\frac{\sin(\frac{\pi (\frac{s}{2})}{2})}{\sin(\frac{\pi (1-\frac{s-1}{2})}{2})})
+\log(\frac{\sin(\frac{\pi (\frac{s-1}{2})}{2})}{\sin(\frac{\pi (1-\frac{s-2}{2})}{2})})
\biggr)
= \\
-\Gamma(2n+1) \biggr(
\sum^{\infty}_{k=0}{ \frac{1} {(k+\frac{s}{2})^{2n+1} } }+
\sum^{\infty}_{k=0}{ \frac{1} {(k+\frac{1-s}{2})^{2n+1} } }-
\sum^{\infty}_{k=0}{ \frac{1} {(k+1-\frac{s}{2})^{2n+1} } }-
\sum^{\infty}_{k=0}{ \frac{1} {(k+\frac{1+s}{2})^{2n+1} } }
\biggr).
\end{split}
\end{equation}
such that 

\begin{equation}
\begin{split}
\Omega(s) = 
\psi^{(2n)}(\frac{s}{2}) + \psi^{(2n)}(\frac{1-s}{2}) - 
\psi^{(2n)}(1-\frac{s}{2}) - \psi^{(2n)}(\frac{1+s}{2})
=\\
\biggr( \psi^{(2n)}(\frac{s}{2}) - \psi^{(2n)}(\frac{1}{2}+\frac{s}{2}) \biggr)+ 
\biggr(\psi^{(2n)}(\frac{1-s}{2}) - 
 \psi^{(2n)}(\frac{1}{2}+\frac{1-s}{2}) \biggr)
=\\
(-2)^{2n+1}\Gamma(2n+1)\biggr(\sum^{\infty}_{k=0}{ \frac{(-1)^k} {(k+s)^{2n+1} } } +
\sum^{\infty}_{k=0}{ \frac{(-1)^k} {(k+1-s)^{2n+1} } } \biggr) =\\
2^{2n}\frac{d^{2n+1}}{ds^{2n+1}} \biggr(
- \log(\frac{\sin(\frac{\pi (\frac{s+2}{2})}{2})}{\cos(\frac{\pi (\frac{s+1}{2})}{2})})
+  \log(\frac{\sin(\frac{\pi (\frac{s+1}{2})}{2})}{\cos(\frac{\pi (\frac{s}{2})}{2})})
-\log(\frac{\sin(\frac{\pi (\frac{s}{2})}{2})}{\cos(\frac{\pi (\frac{s-1}{2})}{2})})
+\log(\frac{\sin(\frac{\pi (\frac{s-1}{2})}{2})}{\cos(\frac{\pi (\frac{s-2}{2})}{2})})
\biggr)
= \\
-\Gamma(2n+1) \biggr(
\sum^{\infty}_{k=0}{ \frac{1} {(k+\frac{s}{2})^{2n+1} } }+
\sum^{\infty}_{k=0}{ \frac{1} {(k+\frac{1-s}{2})^{2n+1} } }-
\sum^{\infty}_{k=0}{ \frac{1} {(k+1-\frac{s}{2})^{2n+1} } }-
\sum^{\infty}_{k=0}{ \frac{1} {(k+\frac{1+s}{2})^{2n+1} } }
\biggr)
\end{split}
\end{equation}

\end{proof}

The functional equations associated with this new function satisfy the following properties:
\begin{equation}
\begin{split} 
\Omega(s) = \Omega(1-s);\\
\Omega(s) = -\Omega(-s); \\
\Omega(s) = -\Omega(s \pm 1);\\
\Omega(s) = \Omega(s \pm 2).
\end{split}
\end{equation}
As a result the following fundamental properties emerge in general:
\begin{equation}
\begin{split} 
\Omega(s) = -\Omega(s \pm 2j+1) = \Omega(-s \pm 2j+1); \\
\Omega(s) = \Omega(s \pm 2j) = -\Omega(-s \pm 2j);
\end{split}
\end{equation}
where j is any integer number. This omega function has poles at integer points. For instance, at the point s=0 and s=1, the omega function, i.e. $\Omega(0)$  or $\Omega(1)$, requires the estimation of $\sum^{\infty}_{k=0}{ \frac{1} {k^{2n+1} } }= \frac{1}{0}+\zeta(2n+1)$:
\begin{equation}
\begin{split}
\Omega(0) = \Omega(1) =
-\Gamma(2n+1) \biggr(
\sum^{\infty}_{k=0}{ \frac{1} {k^{2n+1} } }+
\sum^{\infty}_{k=0}{ \frac{1} {(k+\frac{1}{2})^{2n+1} }}-
\sum^{\infty}_{k=0}{ \frac{1} {(k+1)^{2n+1} } }-
\sum^{\infty}_{k=0}{ \frac{1} {(k+\frac{1}{2})^{2n+1} } }
\biggr).
\end{split}
\end{equation}

\subsection{The omega function of golden ratio}
Let $\phi$ represent the golden ratio constant $ \frac{1+\sqrt{5}}{2}$. The implication of the omega functional equation is that $\Omega(\phi) = \Omega(1-\phi) = \Omega(1+\frac{1}{\phi}) = \Omega(-\frac{1}{\phi})$ because $ \phi = 1+(\phi-1) = 1+\frac{1}{\phi}$. This suggests that all the series sums below will produce the same result. 

$s \rightarrow \phi$:
\begin{equation}
\begin{split}
\frac{\Omega(\phi)}{-\Gamma(2n+1) }= 
\sum^{\infty}_{k=0}{ \frac{1} {(k+\frac{\phi}{2})^{2n+1} } }+
\sum^{\infty}_{k=0}{ \frac{1} {(k+\frac{1}{2}-\frac{\phi}{2})^{2n+1} } }-
\sum^{\infty}_{k=0}{ \frac{1} {(k+1-\frac{\phi}{2})^{2n+1} } }-
\sum^{\infty}_{k=0}{ \frac{1} {(k+\frac{1}{2}+\frac{\phi}{2})^{2n+1} } }.
\end{split}
\end{equation}

$s \rightarrow 1-\phi$:
\begin{equation}
\begin{split}
\frac{\Omega(1-\phi)}{-\Gamma(2n+1) }= 
\sum^{\infty}_{k=0}{ \frac{1} {(k+\frac{1}{2}-\frac{\phi}{2})^{2n+1} } }+
\sum^{\infty}_{k=0}{ \frac{1} {(k+\frac{\phi}{2})^{2n+1} } }-
\sum^{\infty}_{k=0}{ \frac{1} {(k+\frac{1}{2}+\frac{\phi}{2})^{2n+1} } }-
\sum^{\infty}_{k=0}{ \frac{1} {(k+1-\frac{\phi}{2})^{2n+1} } }.
\end{split}
\end{equation}

$s \rightarrow 1+\frac{1}{\phi}$:
\begin{equation}
\begin{split}
\textcolor{lightgray}{
\frac{\Omega(1+\frac{1}{\phi}) }{-\Gamma(2n+1) }= 
\sum^{\infty}_{k=0}{ \frac{1} {(k+\frac{1}{2}+\frac{1}{2\phi})^{2n+1} } }+
\sum^{\infty}_{k=0}{ \frac{1} {(k-\frac{1}{2\phi})^{2n+1} } }-
\sum^{\infty}_{k=0}{ \frac{1} {(k+\frac{1}{2}-\frac{1}{2\phi})^{2n+1} } }-
\sum^{\infty}_{k=0}{ \frac{1} {(k+1+\frac{1}{2\phi})^{2n+1} } }
} \\
\frac{\Omega(1+\frac{1}{\phi}) }{-\Gamma(2n+1) }
=
\sum^{\infty}_{k=0}{ \frac{1} {(k-\frac{1}{2\phi})^{2n+1} } }-
\sum^{\infty}_{k=0}{ \frac{1} {(k+\frac{1}{2}-\frac{1}{2\phi})^{2n+1} } }-
\sum^{\infty}_{k=0}{ \frac{1} {(k+1+\frac{1}{2\phi})^{2n+1} } }+
\sum^{\infty}_{k=0}{ \frac{1} {(k+\frac{1}{2}+\frac{1}{2\phi})^{2n+1} } }.
\end{split}
\end{equation}

$s \rightarrow -\frac{1}{\phi}$:
\begin{equation}
\begin{split}
\frac{\Omega(-\frac{1}{\phi})}{-\Gamma(2n+1) }= 
\sum^{\infty}_{k=0}{ \frac{1} {(k-\frac{1}{2\phi})^{2n+1} } }+
\sum^{\infty}_{k=0}{ \frac{1} {(k+\frac{1}{2}+\frac{1}{2\phi})^{2n+1} } }-
\sum^{\infty}_{k=0}{ \frac{1} {(k+1+\frac{1}{2\phi})^{2n+1} } }-
\sum^{\infty}_{k=0}{ \frac{1} {(k+\frac{1}{2}-\frac{1}{2\phi})^{2n+1} } }.
\end{split}
\end{equation}

\section{Discussion}
In this paper we derived a new formula for representing the actual value of $\psi^{2n}(s)$ from which we further derived another formula for expressing the value of $\zeta(2n+1)$ employing a method dependent on $\psi^{2n}(s)+\psi^{2n}(1-s)$ calculation. For instance, we uncovered the new formula for the first time

\[
\zeta(2n+1)= 
-{{( \psi^{(2n)}({1\over4})+\psi^{(2n)}({3\over4}))}\over{2^{2n+1}.(2^{2n+1}-1)}}.{1 \over{\Gamma(2n+1)}}
= 
-\frac{2
\frac{d^{(2n+1)}}{ds^{(2n+1)}}(\log(\frac{\zeta(1-s)}{\zeta(s)}))\biggr \vert_{s \rightarrow \frac{1}{4}}  + \frac{d^{(2n+1)}}{ds^{(2n+1)}}(\log(\tan(\frac{\pi s}{2}))) \biggr \vert_{s \rightarrow \frac{1}{4}} 
}
{
2^{2n+1}(2^{2n+1}-1)\Gamma(2n+1)
}
\]

We then presented new strategies for calculating the value of the logarithm of $\frac{\zeta(s)}{\zeta(1-s)}$ based on fractions of other zeta functions at smaller points of arguments. New identities relating the logarithm of Riemann zeta function to that of gamma function were also presented and all proved primarily using elementary functions only. We considered presenting new combinatorial results and perspectives on contemporay zeta functional analysis to establish deeper connections with several related zeta functions for the first time. We then used one of our main results about a recent discovery (also presented here) to uncover and present the relation 

\[
\begin{split}
\Omega(s) = 
\psi^{(2n)}(\frac{s}{2}) + \psi^{(2n)}(\frac{1-s}{2}) - 
\psi^{(2n)}(1-\frac{s}{2}) - \psi^{(2n)}(\frac{1+s}{2})
=\\
\biggr( \psi^{(2n)}(\frac{s}{2}) - \psi^{(2n)}(\frac{1}{2}+\frac{s}{2}) \biggr)+ 
\biggr(\psi^{(2n)}(\frac{1-s}{2}) - 
 \psi^{(2n)}(\frac{1}{2}+\frac{1-s}{2}) \biggr)
=\\
(-2)^{2n+1}\Gamma(2n+1)\biggr(\sum^{\infty}_{k=0}{ \frac{(-1)^k} {(k+s)^{2n+1} } } +
\sum^{\infty}_{k=0}{ \frac{(-1)^k} {(k+1-s)^{2n+1} } } \biggr) =\\
2^{2n}\frac{d^{2n+1}}{ds^{2n+1}} \biggr(
- \log(\frac{\sin(\frac{\pi (\frac{s+2}{2})}{2})}{\cos(\frac{\pi (\frac{s+1}{2})}{2})})
+  \log(\frac{\sin(\frac{\pi (\frac{s+1}{2})}{2})}{\cos(\frac{\pi (\frac{s}{2})}{2})})
-\log(\frac{\sin(\frac{\pi (\frac{s}{2})}{2})}{\cos(\frac{\pi (\frac{s-1}{2})}{2})})
+\log(\frac{\sin(\frac{\pi (\frac{s-1}{2})}{2})}{\cos(\frac{\pi (\frac{s-2}{2})}{2})})
\biggr)
= \\
-\Gamma(2n+1) \biggr(
\sum^{\infty}_{k=0}{ \frac{1} {(k+\frac{s}{2})^{2n+1} } }+
\sum^{\infty}_{k=0}{ \frac{1} {(k+\frac{1-s}{2})^{2n+1} } }-
\sum^{\infty}_{k=0}{ \frac{1} {(k+1-\frac{s}{2})^{2n+1} } }-
\sum^{\infty}_{k=0}{ \frac{1} {(k+\frac{1+s}{2})^{2n+1} } }
\biggr)
\end{split}
\]
which may provide new optimised and efficient strategies for analysing and evaluating sums of alternating series related to zeta, Dirichlet beta, and important nonelementary functions. For example, 

\begin{equation}
\begin{split}
-{ 2(2^{2n+1})2^{2n+1}\beta(2n+1)}=
\Omega(\frac{1}{4}) = 
2\psi^{(2n)}(\frac{1}{4}) - 2\psi^{(2n)}(\frac{3}{4})
=\\
(-2)^{2n+1}\Gamma(2n+1)\biggr(\sum^{\infty}_{k=0}{ \frac{(-1)^k} {(k+\frac{1}{2})^{2n+1} } } +
\sum^{\infty}_{k=0}{ \frac{(-1)^k} {(k+\frac{1}{2})^{2n+1} } } \biggr) =\\
2^{2n}\frac{d^{2n+1}}{ds^{2n+1}} \biggr(
- \log(\frac{\sin(\frac{\pi (\frac{s+2}{2})}{2})}{\cos(\frac{\pi (\frac{s+1}{2})}{2})})
+  \log(\frac{\sin(\frac{\pi (\frac{s+1}{2})}{2})}{\cos(\frac{\pi (\frac{s}{2})}{2})})
-\log(\frac{\sin(\frac{\pi (\frac{s}{2})}{2})}{\cos(\frac{\pi (\frac{s-1}{2})}{2})})
+\log(\frac{\sin(\frac{\pi (\frac{s-1}{2})}{2})}{\cos(\frac{\pi (\frac{s-2}{2})}{2})})
\biggr) \biggr \vert_{x \rightarrow \frac{1}{2}}
\end{split}
\end{equation}
It is a delight knowing that the same technique could be used to analyse slightly more complicated results such as 

\begin{equation}
\begin{split}
\Omega(i) = 
\psi^{(2n)}(\frac{i}{2}) + \psi^{(2n)}(\frac{1-i}{2}) - 
\psi^{(2n)}(1-\frac{i}{2}) - \psi^{(2n)}(\frac{1+i}{2}) = \\
(-2)^{2n+1}\Gamma(2n+1)\biggr(\sum^{\infty}_{k=0}{ \frac{(-1)^k} {(k+i)^{2n+1} } } +
\sum^{\infty}_{k=0}{ \frac{(-1)^k} {(k+1-i)^{2n+1} } } \biggr) =\\
(-2)^{2n+1}\Gamma(2n+1)\biggr(\sum^{\infty}_{k=0}{ \frac{(-1)^k} {(k+i)^{2n+1} } } -
\sum^{\infty}_{k=1}{ \frac{(-1)^k} {(k-i)^{2n+1} } } \biggr) =\\
(-2)^{2n+1}\Gamma(2n+1)\biggr(\frac{1}{i^{2n+1}}+\sum^{\infty}_{k=1}{ \frac{(-1)^k( (k-i)^{2n+1} -(k+i)^{2n+1})} {(k^2+1)^{2n+1} } } \biggr) =\\
2^{2n}\frac{d^{2n+1}}{ds^{2n+1}} \biggr(
- \log(\frac{\sin(\frac{\pi (\frac{s+2}{2})}{2})}{\cos(\frac{\pi (\frac{s+1}{2})}{2})})
+  \log(\frac{\sin(\frac{\pi (\frac{s+1}{2})}{2})}{\cos(\frac{\pi (\frac{s}{2})}{2})})
-\log(\frac{\sin(\frac{\pi (\frac{s}{2})}{2})}{\cos(\frac{\pi (\frac{s-1}{2})}{2})})
+\log(\frac{\sin(\frac{\pi (\frac{s-1}{2})}{2})}{\cos(\frac{\pi (\frac{s-2}{2})}{2})})
\biggr) \biggr \vert_{x \rightarrow i}
\end{split}
\end{equation}
which has instantaneously provided a stable mechanism of evaluating the series sum in terms of derivatives of the logarithm of elementary trogonometric functions; as a result, avoiding actual computations of $\psi^{(2n)}(\frac{i}{2}),\psi^{(2n)}(\frac{1-i}{2}), \psi^{(2n)}(1-\frac{i}{2})$ and $\psi^{(2n)}(\frac{1+i}{2})$ without requiring factorising $( (k-i)^{2n+1} -(k+i)^{2n+1})$. 
We then ask: how important is the result
\begin{equation}
\begin{split}
\frac{1}{i^{2n+1}}+
\sum^{\infty}_{k=1}{ \frac{(-1)^k( (k-i)^{2n+1} -(k+i)^{2n+1})} {(k^2+1)^{2n+1} } } \\ =
-\frac{
\frac{d^{2n+1}}{ds^{2n+1}} \biggr(
- \log(\frac{\sin(\frac{\pi (\frac{s+2}{2})}{2})}{\cos(\frac{\pi (\frac{s+1}{2})}{2})})
+  \log(\frac{\sin(\frac{\pi (\frac{s+1}{2})}{2})}{\cos(\frac{\pi (\frac{s}{2})}{2})})
-\log(\frac{\sin(\frac{\pi (\frac{s}{2})}{2})}{\cos(\frac{\pi (\frac{s-1}{2})}{2})})
+\log(\frac{\sin(\frac{\pi (\frac{s-1}{2})}{2})}{\cos(\frac{\pi (\frac{s-2}{2})}{2})})
\biggr) \biggr \vert_{x \rightarrow i}
}
{2\Gamma(2n+1)} ~{\LARGE ?}
\end{split}
\end{equation}
We ponder on such interesting questions and sense the need to review the result

\begin{equation}
\begin{split}
\frac{1}{(-6i)i^3}+
\sum^{\infty}_{k=1}{ \frac{(-1)^k( k^2-\frac{1}{3})} {(k^2+1)^{3} } }\\=
-\frac{
\frac{d^{3}}{ds^{3}} \biggr(
- \log(\frac{\sin(\frac{\pi (\frac{s+2}{2})}{2})}{\cos(\frac{\pi (\frac{s+1}{2})}{2})})
+  \log(\frac{\sin(\frac{\pi (\frac{s+1}{2})}{2})}{\cos(\frac{\pi (\frac{s}{2})}{2})})
-\log(\frac{\sin(\frac{\pi (\frac{s}{2})}{2})}{\cos(\frac{\pi (\frac{s-1}{2})}{2})})
+\log(\frac{\sin(\frac{\pi (\frac{s-1}{2})}{2})}{\cos(\frac{\pi (\frac{s-2}{2})}{2})})
\biggr) \biggr \vert_{x \rightarrow i}
}
{2(-6i)\Gamma(2n+1)}.
\end{split}
\end{equation}
which was derived after just a few additional steps.


{~\\
Dr Michael A. Idowu\\
University of Abertay\\
Dundee \\
DD1 1HG\\
Scotland\\
United Kingdom\\
m.idowu@abertay.ac.uk; \\
michade@hotmail.com}

\end{document}